\documentclass[11pt]{article}
\usepackage[margin=1in]{geometry}
\usepackage{setspace}
\usepackage{fancyhdr}
\usepackage{amsmath,amssymb,amsthm}
\usepackage{graphicx}
\usepackage{booktabs}
\usepackage{array}
\usepackage[hidelinks]{hyperref}
\DeclareUnicodeCharacter{2264}{\ensuremath{\le}}
\DeclareUnicodeCharacter{00D7}{\ensuremath{\times}}
\DeclareUnicodeCharacter{2192}{\ensuremath{\to}}

\theoremstyle{plain}
\newtheorem{theorem}{Theorem}
\newtheorem{proposition}[theorem]{Proposition}
\newtheorem{lemma}[theorem]{Lemma}
\newtheorem{corollary}[theorem]{Corollary}

\newtheorem{question}{Question}
\theoremstyle{definition}
\newtheorem*{definition}{Definition}

\newcommand{\RR}{\mathbb{R}}
\newcommand{\CC}{\mathbb{C}}
\newcommand{\QQ}{\mathbb{Q}}
\newcommand{\FF}{\mathbb{F}}
\newcommand{\ZZ}{\mathbb{Z}}
\newcommand{\PP}{\mathbb{P}}
\newcommand{\cP}{\mathcal{P}}
\newcommand{\cL}{\mathcal{L}}
\newcommand{\cC}{\mathcal{C}}

\newcommand{\PGL}{\operatorname{PGL}}
\newcommand{\SL}{\operatorname{SL}}
\newcommand{\SO}{\operatorname{SO}}

\title{Symmetric, rational and infinitely many $(23_4)$ configurations, and the end of the existence problem for geometric $(n_4)$ configurations}
\author{Wenhao Lu}
\date{}

\begin{document}
\onehalfspacing

\maketitle
\thispagestyle{empty}
\newpage

\begin{abstract}
A geometric $(n_4)$ configuration is a set of $n$ points and $n$ lines in the real projective plane such that every point lies on exactly four of the lines and every line contains exactly four of the points.  The question of which $n$ admit such a configuration was posed by Gr\"unbaum in 2000, following his 1990 paper with Rigby that gave the first geometric $4$-configuration, a $(21_4)$.  Successive work of Bokowski-Schewe, of Bokowski, Gr\"unbaum and Schewe, of Bokowski-Pilaud, and of Cuntz settled every case but one.  Cuntz's 2018 paper, which constructed $(22_4)$ and $(26_4)$ configurations, states that only $n=23$ remained open.

We settle this last case by bounding the symmetry of such configurations and classifying the most symmetric ones exactly.  We prove that for every prime $n\equiv3\pmod4$, in particular for $n=23$, the group of projective symmetries of a geometric $(n_4)$ configuration has order at most four, which explains why $n=23$ resisted the symmetric constructions that settled the other values, and the same argument bounds the symmetry of geometric $(n_5)$ configurations for primes $n\not\equiv1\pmod5$.  Reducing the case of a Klein four-group of symmetries to a few thousand bilinear polynomial systems in $18$ unknowns, and solving all of them exactly with Gr\"obner bases and saturation, we show that exactly two geometric $(23_4)$ configurations reach this bound, up to projective equivalence, and that there are no others even over $\CC$.  Both are self-polar with respect to a conic.  The first has integer homogeneous coordinates, so verifying it is an exercise in integer arithmetic; the second is defined over $\QQ(\sqrt{17})$, but not over $\QQ$.  Consequently geometric $(n_4)$ configurations exist if and only if $n\ge18$ and $n\ne19$.  The same classification at $n=22$ and $n=26$ recovers Cuntz's configurations and finds three new combinatorial types at $n=22$ and $94$ at $n=26$, among them a $(26_4)$ configuration that is self-polar with respect to a conic without real points.

The two configurations above are not the only geometric $(23_4)$ configurations.  Among the self-polar ones with a nontrivial symmetry we find $50$ more, each with a single symmetry of order two, defined over number fields of degree at most $7$ and in three cases over $\QQ$, and computations modulo two primes indicate that this list is complete.  Four of them deform in one-parameter families.  Computing three of these families exactly over $\QQ$, we prove that there are infinitely many geometric $(23_4)$ configurations, among them infinitely many with rational coordinates that are self-polar and have no projective symmetry.
\end{abstract}
\thispagestyle{empty}
\newpage
\setcounter{page}{1}

\section{Introduction}\label{sec:intro}

A \emph{geometric $(n_k)$ configuration} is a set $\cP$ of $n$ points and a set $\cL$ of $n$ lines in the real projective plane such that every point of $\cP$ lies on exactly $k$ lines of $\cL$ and every line of $\cL$ contains exactly $k$ points of $\cP$.  (If lines are replaced by pseudolines one speaks of \emph{topological} configurations, and if the points and lines are abstract one speaks of \emph{combinatorial} configurations; this paper concerns only the geometric notion.)  The systematic study of $(n_k)$ configurations goes back to the nineteenth century; the monograph of Gr\"unbaum \cite{Gru09} and the book of Pisanski and Servatius \cite{PS13} give the history and the state of the art.  The central existence question, \emph{for which $n$ does a geometric $(n_k)$ configuration exist?}, is completely answered for $k=3$ (exactly when $n\ge 9$) and wide open for $k\ge 5$.

For $k=4$ the answer has been determined value by value over the last three decades, starting from the first geometric $4$-configuration, the $(21_4)$ of Gr\"unbaum and Rigby \cite{GR90} (G\'evay, Kiss and Pisanski \cite{GKP25} point out that K\'arteszi \cite{Kar64} published essentially the same figure in 1964).  Gr\"unbaum \cite{Gru00a,Gru00,Gru02,Gru06} showed that geometric $(n_4)$ configurations exist for all $n\ge 21$ apart from a finite list of possible exceptions, and later constructed a $(20_4)$ \cite{Gru08}; Bokowski, Gr\"unbaum and Schewe showed that topological $(n_4)$ configurations exist for all $n\ge17$ \cite{BGS09}; Bokowski and Schewe proved that no geometric ones exist for $n\le 16$ \cite{BS05} and none for $n=17$ \cite{BS13}, and found the first $(18_4)$ \cite{BS13}; Bokowski and Pilaud showed that there are exactly two $(18_4)$ configurations \cite{BP14} and no $(19_4)$ configuration \cite{BP15}, and constructed $(37_4)$ and $(43_4)$ configurations \cite{BP16}; finally Cuntz constructed $(22_4)$ and $(26_4)$ configurations \cite{Cun18}.  After Cuntz's paper exactly one case remained.  In his words, ``concerning the existence of geometric $(n_4)$ configurations, only the case $n=23$ remains open'' \cite{Cun18}; see also Berman, G\'evay and Pisanski \cite[Section~1]{BGP21} and Berman, G\'evay and Pisanski \cite{BGP24}.  Cuntz also exhibited a $(23_4)$ configuration in the \emph{complex} projective plane, $23$ lines defined over $\QQ(i)$ with $25$ quadruple points \cite[Section~2.3]{Cun18}, remarking that it gives no hint about real existence.  Since the arrangements of lines of both configurations in this paper have exactly $23$ quadruple points, it is not projectively equivalent to either.

\emph{Note added.}  After an abstract stating both configurations of Theorem~\ref{thm:main}, Corollary~\ref{cor:complete}, the bound of four on the order of the symmetry group of a geometric $(23_4)$ configuration and the classification of Theorem~\ref{thm:classification} was submitted to the Joint Mathematics Meetings 2027, a $(23_4)$ configuration that coincides with $(\cP_A,\cL_A)$ up to the interchange $y\leftrightarrow z$ was found independently by W.~Strinz and made public \cite{Str26}.  The author and Strinz corresponded in September 2026 to establish this timeline.

\begin{theorem}\label{thm:main}
There exist geometric $(23_4)$ configurations, as follows.
\begin{enumerate}
\item[(A)] In homogeneous coordinates $(x:y:z)$, let $\cP_A$ consist of the $23$ points
\begin{equation}\label{eq:pointsA}
\begin{gathered}
(1{:}0{:}0),\quad (0{:}1{:}{\pm}1),\quad (0{:}3{:}{\pm}2),\quad (1{:}{\pm}1{:}0),\\
(3{:}{\pm}2{:}{\pm}2),\quad (2{:}{\pm}3{:}{\pm}2),\quad (6{:}{\pm}1{:}{\pm}2),\quad (6{:}{\pm}5{:}{\pm}2),
\end{gathered}
\end{equation}
and let $\cL_A$ consist of the $23$ lines
\begin{equation}\label{eq:linesA}
\begin{gathered}
x=0,\quad y\pm z=0,\quad 2y\pm3z=0,\quad x\pm3z=0,\\
x\pm2y\pm2z=0,\quad 2x\pm6y\pm9z=0,\quad 2x\pm2y\pm z=0,\quad 2x\pm2y\pm5z=0,
\end{gathered}
\end{equation}
where all sign combinations are taken.  Then $(\cP_A,\cL_A)$ is a geometric $(23_4)$ configuration.
\item[(B)] Let $\tau=\sqrt{(1+\sqrt{17})/4}\approx 1.1317139$, so that $2\tau^4-\tau^2-2=0$ and $\tau^{-1}=\sqrt{(\sqrt{17}-1)/4}$.  In an affine chart of the real projective plane with coordinates $(X,Z)$, let $\cP_B$ consist of the point $O_1$ at infinity in the direction of the $X$-axis together with the $22$ finite points
\begin{equation}\label{eq:points}
\begin{gathered}
(0,\pm\tau),\quad (0,\pm\tau^{-1}),\quad (\pm 2,\,0),\\
(\pm 1,\ \pm(\tau+\tau^{-1})),\quad (\pm 2\tau^{2},\ \pm\tau),\quad
(\pm 1,\ \pm(\tau-\tau^{-1})),\quad (\pm 2\tau^{-2},\ \pm\tau^{-1}),
\end{gathered}
\end{equation}
and let $\cL_B$ consist of the polar lines of the points of $\cP_B$ with respect to the conic $X^2-2Z^2=2$ (that is, the line $X_0X-2Z_0Z=2$ for a finite point $(X_0,Z_0)$, and the line $X=0$ for $O_1$).  Then $(\cP_B,\cL_B)$ is a geometric $(23_4)$ configuration.  It is defined over $\QQ(\sqrt{17})$, because the substitution $Z\mapsto\tau Z$ turns it into the configuration whose finite points are
\begin{equation}\label{eq:points17}
\begin{gathered}
(0,\pm1),\quad \Bigl(0,\pm\tfrac{\sqrt{17}-1}{4}\Bigr),\quad(\pm2,0),\quad \Bigl(\pm1,\pm\tfrac{3+\sqrt{17}}{4}\Bigr),\\
\Bigl(\pm\tfrac{1+\sqrt{17}}{2},\pm1\Bigr),\quad \Bigl(\pm1,\pm\tfrac{5-\sqrt{17}}{4}\Bigr),\quad \Bigl(\pm\tfrac{\sqrt{17}-1}{2},\pm\tfrac{\sqrt{17}-1}{4}\Bigr),
\end{gathered}
\end{equation}
self-polar with respect to the conic $X^2-\frac{1+\sqrt{17}}{2}Z^2=2$.  It is not projectively equivalent to a configuration with rational coordinates.
\end{enumerate}
The two configurations are not isomorphic as incidence structures.
\end{theorem}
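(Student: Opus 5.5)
The statement is a finite verification, and I would organize it to exploit the symmetry visible in the coordinates. For (A), the sign changes $y\mapsto -y$ and $z\mapsto -z$ preserve both $\cP_A$ and $\cL_A$. They generate a Klein four-group $V$. I would first list the $V$-orbits: on points these are $\{(1{:}0{:}0)\}$, $\{(0{:}1{:}\pm1)\}$, $\{(0{:}3{:}\pm2)\}$, $\{(1{:}\pm1{:}0)\}$ and four orbits of size four. The lines split the same way. One checks that the $23$ listed points are distinct and no two lines coincide.

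Because incidence is $V$-invariant, it suffices to compute, for one representative line from each orbit, its dot products with all $23$ points. This means evaluating integer linear forms at integer vectors and counting zeros; each count must be exactly four. For example, $x=0$ contains $(0{:}1{:}\pm1)$ and $(0{:}3{:}\pm2)$ and nothing else, since all other points have $x\neq0$. Likewise $2x+6y+9z=0$ passes through $(3{:}{-}2{:}0)$? No such point exists, so the zeros must instead be found among the size-four orbits; I would simply tabulate them. Having checked every line, double counting gives $23\cdot4$ incidences in total. The per-point condition still has to be checked separately, and I would do this by the same tabulation, read by columns. This step is routine.

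For (B), I would work in the rescaled model (\ref{eq:points17}) over $K=\QQ(\sqrt{17})$, where the symmetry group again contains $X\mapsto -X$ and $Z\mapsto -Z$. The key simplification is self-polarity. The line $\ell_Q$ is the polar of $Q$ with respect to a symmetric bilinear form $\beta$, so $P\in \ell_Q$ iff $\beta(P,Q)=0$. This relation is symmetric, so the incidence matrix is symmetric, and "each line has four points" is equivalent to "each point has four lines". It therefore suffices to show that each representative point $P$ is $\beta$-orthogonal to exactly four points of $\cP_B$, with $P$ itself excluded because $P\notin$ its own polar unless $P$ lies on the conic. Each check is an identity in $K$, which I would verify using $s^2=17$, where $s=\sqrt{17}$. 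To confirm that a value is nonzero I would compare real approximations.

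For the remaining claims, I would first reduce non-isomorphism to invariants. A cheap invariant of an incidence structure is the number of triangles, or the multiset, over points, of the number of points not collinear with them in the configuration. I would compute this invariant for both configurations from the incidence tables and exhibit a difference. Failing that, I would compare the sizes of the combinatorial automorphism groups.

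Irrationality of (B) is the main obstacle. My plan is to choose four points of $\cP_B$ in general position and send them to the standard frame by a projective map, which is unique. Every other point then acquires canonical coordinates, and any projective image defined over $\QQ$ would yield the same canonical coordinates. However, the ordering or labelling of the frame could vary under combinatorial automorphisms, so I need an invariant statement. I would therefore compute a cross-ratio of four collinear points of the configuration, on one line. If it equals, say, $(1+\sqrt{17})/4$ up to the standard $S_3$-orbit of cross-ratio values, then it lies in $K\setminus\QQ$, and so does every value in its orbit. Cross-ratios are projective invariants and are permuted only within that orbit when points are relabelled, so (B) cannot be defined over $\QQ$. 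The delicate part is ensuring that rationality of the configuration means rationality up to projective equivalence over $\RR$. The invariance of cross-ratios handles this directly.
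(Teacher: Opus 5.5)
Your plan is correct and follows the paper's proof. It rests on a direct check of all incidences: the paper tabulates the $529$ integer products for (A), and for (B) evaluates the symmetric polarity form on the $276$ pairs, which is exactly your self-polarity reduction. Non-isomorphism comes from comparing Levi-graph automorphism groups (orders $4$ and $8$), and irrationality from the cross-ratio $33-8\sqrt{17}$ of the four points on $X=0$ in the $\QQ(\sqrt{17})$ model.

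Two small corrections. Your first proposed invariant is useless, because in any $(23_4)$ configuration every point is collinear with exactly $12$ others, so it is your fallback to automorphism groups that does the work. Also, the line $2x+6y+9z=0$ does pass through $(0{:}3{:}{-}2)$, a point from an orbit of size two.
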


Figures~\ref{fig:configA} and~\ref{fig:config} show the two configurations.  Combining Theorem~\ref{thm:main} with the results cited above gives the complete answer to the existence question for $k=4$.

\begin{corollary}\label{cor:complete}
A geometric $(n_4)$ configuration exists if and only if $n\ge 18$ and $n\ne 19$.
\end{corollary}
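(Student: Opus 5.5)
The corollary is the union of the existence results and non-existence results cited in the introduction with Theorem~\ref{thm:main}; the plan is to check that they cover every $n$ with no gaps.

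\emph{Non-existence.} Bokowski and Schewe exclude $n\le 16$ \cite{BS05} and $n=17$ \cite{BS13}. Bokowski and Pilaud exclude $n=19$ \cite{BP15}. Together these rule out every $n<18$ and $n=19$.

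\emph{Existence.} The cited constructions give the following values.
\begin{itemize}
\item $n=18$: Bokowski--Schewe \cite{BS13}; see also \cite{BP14}.
\item $n=20$: Gr\"unbaum \cite{Gru08}.
\item $n=21$: Gr\"unbaum--Rigby \cite{GR90}.
\item $n=22$ and $n=26$: Cuntz \cite{Cun18}.
\item $n=37$ and $n=43$: Bokowski--Pilaud \cite{BP16}.
\item $n=23$: Theorem~\ref{thm:main}.
\end{itemize}
Gr\"unbaum's results \cite{Gru00a,Gru00,Gru02,Gru06} cover all $n\ge 21$ outside a finite list of possible exceptions. The essential bookkeeping step is to confirm that this list, after subsequent work, had been reduced to exactly $\{23\}$. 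This is the content of Cuntz's statement that ``only the case $n=23$ remains open'' \cite{Cun18}, corroborated by \cite{BGP21,BGP24}. I would cite the explicit exceptional list from Gr\"unbaum's monograph \cite{Gru09} and match each entry to the paper that removed it.

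The main obstacle is therefore not mathematical. It lies in tracing the exceptional list precisely through the literature, so that no value between $21$ and $43$ is silently left open.

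It remains to verify Theorem~\ref{thm:main}(A), on which the corollary rests for $n=23$. I would do this by checking incidences with integer dot products. The sign symmetries $y\mapsto -y$ and $z\mapsto -z$ preserve both $\cP_A$ and $\cL_A$, so it suffices to treat one orbit representative of each point type and each line type. For each representative I would count zero dot products and confirm exactly four. I would also confirm that the points are distinct and the lines are distinct.

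For example, the line $x=0$ contains $(0{:}1{:}\pm1)$ and $(0{:}3{:}\pm2)$, which are exactly four points. Dually, the point $(1{:}0{:}0)$ lies on $y\pm z=0$ and $2y\pm3z=0$, again exactly four lines. Real existence needs only part (A); part (B) is not required for the corollary.
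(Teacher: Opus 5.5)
Your proposal is correct and follows the paper's own argument. The paper obtains the corollary exactly this way: the cited non-existence results for $n\le 17$ and $n=19$, the cited constructions together with Cuntz's statement that only $n=23$ remained open, and Theorem~\ref{thm:main}(A) for $n=23$. Your reduction of the incidence check to one representative per $V$-orbit is valid, since $V$ permutes both $\cP_A$ and $\cL_A$, and it is only a shortcut over the paper's full $529$ dot products.
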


\begin{figure}[t]
\begin{minipage}[t]{0.485\textwidth}
\centering
\includegraphics[width=\textwidth,page=1]{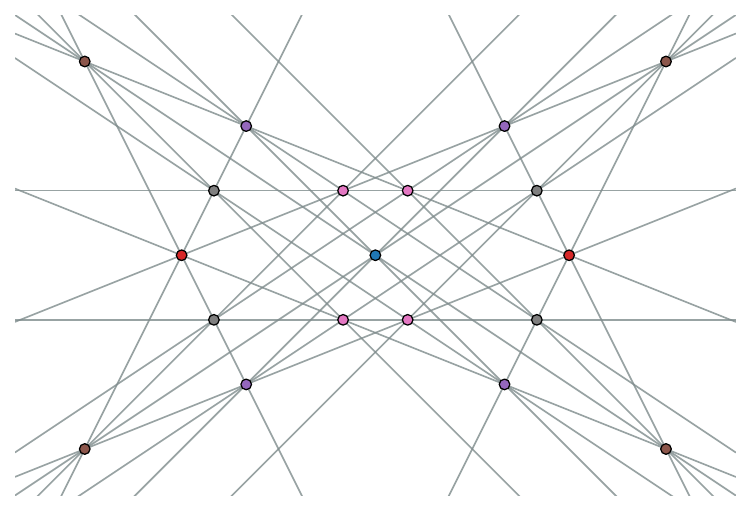}
\caption{The integral configuration $(\cP_A,\cL_A)$ of Theorem~\ref{thm:main}(A), drawn in the chart $x=1$ with coordinates $(y/x,\,z/x)$, so that $O_1=(1{:}0{:}0)$ is at the origin and the four points of $\cP_A$ on the line $x=0$ are at infinity.  Points are coloured by their orbit under the Klein four-group $V$; the eight orbits have sizes $1,2,2,2,4,4,4,4$.  The picture is symmetric under the reflections in both axes, which generate the full symmetry group. Figure created by the author using Matplotlib 3.10.8 \cite{matplotlib}.}
\label{fig:configA}
\end{minipage}\hfill
\begin{minipage}[t]{0.485\textwidth}
\centering
\includegraphics[width=\textwidth,page=2]{figs.pdf}
\caption{The self-polar configuration $(\cP_B,\cL_B)$ of Theorem~\ref{thm:main}(B), drawn in the chart of \eqref{eq:points}.  The point $O_1$ is at infinity in the horizontal direction, and the four horizontal lines are the four lines through it.  Points are coloured by their orbit under the Klein four-group $V$.  The picture is symmetric under the reflections $(X,Z)\mapsto(\pm X,\pm Z)$, which generate the full symmetry group of the configuration. Figure created by the author using Matplotlib 3.10.8 \cite{matplotlib}.}
\label{fig:config}
\end{minipage}
\end{figure}

Almost all known geometric $(n_4)$ configurations were found by imposing a large symmetry group (cyclic or dihedral of order at least $6$) and solving the resulting small system of equations; see \cite[Ch.~3]{Gru09} and \cite{BGP24}.  The following result shows that this strategy is bound to fail for $n=23$, and more generally for every prime $n\equiv3\pmod4$.

\begin{theorem}\label{thm:symmetry}
Let $n$ be a prime with $n\equiv3\pmod4$, and let $\cC$ be a geometric $(n_4)$ configuration.  Then the group of projective collineations of the real projective plane mapping $\cC$ onto itself is trivial, cyclic of order two, or the Klein four-group $\ZZ_2\times\ZZ_2$.
\end{theorem}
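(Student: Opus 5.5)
The plan is to bound the symmetry group $G$ by counting fixed points and orbits.  Let $G\subset\PGL_3(\RR)$ be the group of collineations preserving $\cC=(\cP,\cL)$.  A collineation fixing four points of $\cP$ in general position is the identity, and $\cP$ contains such quadruples (otherwise all points would lie on few lines, which is impossible for an $(n_4)$ configuration).  Hence $G$ acts faithfully on $\cP$ and is finite.  Since $G$ is a finite subgroup of $\PGL_3(\RR)$, it lifts to a finite subgroup of $\SL_3(\RR)$ (take determinant-one representatives; $\SL_3(\RR)\to\PGL_3(\RR)$ is an isomorphism because $-I$ has determinant $-1$).  Averaging an inner product shows $G$ is conjugate into $\SO(3)$.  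So $G$ is cyclic, dihedral, or one of $A_4,S_4,A_5$, and every nontrivial element is a rotation with a fixed axis, i.e.\ it has an isolated fixed point (the axis) and a fixed line (the orthogonal plane).

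The second step constrains element orders using $n$ prime.  Let $g\in G$ have prime order $p$.  Its fixed points on $\PP^2(\RR)$ are the centre $c$ together with the points of the axis line $\ell$; when $p=2$, $g$ acts as the identity on all of $\ell$, and when $p\ge3$ its only real fixed point off the centre is none.  For $p$ odd, the points of $\cP$ fixed by $g$ number at most one, so $n\equiv 0$ or $1\pmod p$.  The same holds for the lines, where the only fixed line is $\ell$.  I would now count the incidences at $c$: if $c\in\cP$, the four lines through $c$ are permuted by $g$, and no line through $c$ other than possibly none is fixed (the fixed lines of $g$ are $\ell$ and, for $p=2$, the lines through $c$).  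So for $p$ odd, $p\mid 4$ unless $c\notin\cP$, which forces $p\mid n$; $n$ is prime, so $p=n$.  I would rule out $p=n$ (a rotation of order $n$ with regular orbits) using the dual count: then $\ell\notin\cL$, and an $n$-fold cyclic configuration would have to be cyclic of prime order $n\equiv3\pmod4$.  I expect this to contradict a parity or orientation obstruction, for instance the requirement that the $4$ lines through a point correspond to a difference set in $\ZZ_n$ compatible with real rotations, or simply Euler-characteristic/area counting.  This exclusion of the cyclic case $p=n$ is the step I expect to be the main obstacle, and I would first try to show that a rotation-invariant real configuration must have $4$ distinct rotation "slopes", which forces an orbit count incompatible with $n\equiv 3\pmod4$.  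Consequently $G$ is a $2$-group (all prime divisors equal $2$) inside $\SO(3)$, so $G$ is cyclic or dihedral of $2$-power order.

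The third step uses the involutions.  For an involution $g$, its fixed points are $c$ and all of $\ell$, and its fixed lines are $\ell$ and the pencil through $c$.  Pair off the non-fixed points: $n-|\mathrm{Fix}_\cP(g)|$ is even, so since $n$ is odd, $g$ fixes an odd number of points, and similarly an odd number of lines.  If $G$ contained an element of order $4$, it would be a rotation by $\pi/2$ whose square $h$ is an involution with the same centre and axis.  The points fixed by $h$ lie on $\ell\cup\{c\}$, and the order-$4$ element acts on them with orbits of size $4$ apart from $c$.  The number of points of $\cP$ on $\ell$ is $0$ or $4$ (four if $\ell\in\cL$).  Combining these with $n\equiv 3\pmod 4$ yields a contradiction: the free orbits have size $4$, orbits of size $2$ occur only on $\ell$ and through $c$, and a careful count of points and lines modulo $4$ gives $n\equiv0$ or $1\pmod4$.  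Hence $G$ has exponent $2$ within the cyclic/dihedral $2$-groups of $\SO(3)$, so $G$ is trivial, $\ZZ_2$, or $\ZZ_2\times\ZZ_2$.
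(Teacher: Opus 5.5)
Your overall route is the paper's: faithful action via four points in general position, conjugation into $\SO(3)$, then excluding rotations of order at least $3$ by orbit counting. Reducing to a rotation of odd prime order $p$ or of order $4$ is a legitimate way to organize this.

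\textbf{The gap.} It is exactly the step you flagged. For odd $p$ you correctly get $c\notin\cP$ and $p\mid n$, but with $n$ prime this only says $p=n$. You give no argument excluding that case; the difference-set and Euler-characteristic ideas are speculation, and divisibility plus primality alone cannot do it. The missing idea is short and geometric. An orbit of a rotation about the axis $c$ lies on one circle of latitude, so in $\PP^2(\RR)$ it lies either on $\ell$ or on a nondegenerate conic. In either case a line other than $\ell$ contains at most two of its points. If $p=n$, then $\cP$ is a single orbit, so no line of $\cL$ other than $\ell$ contains four points of $\cP$, a contradiction. The paper uses the same observation to prove the stronger bound $m\le n/2$. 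There is a line $g\in\cL$ with $g\ne\ell$ and $c\notin g$; it picks up at most one point of $\cP$ on $\ell$ and at most two from each orbit on a conic. Hence $4\le 1+2k$, where $k$ is the number of such orbits, so $k\ge2$ and $n\ge 2m$.

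\textbf{Errors in the third step.} It reaches the right conclusion, but two intermediate claims are false. First, the order-$4$ rotation $r$ acts on $\ell$ with orbits of size $2$, not $4$, because $r^2$ fixes $\ell$ pointwise. Second, when $\ell\notin\cL$ the number of points of $\cP$ on $\ell$ is not restricted to $0$ or $4$. The correct count is as follows.
\begin{itemize}
\item Every orbit of $\langle r\rangle$ other than $\{c\}$ has even size, so $n$ odd forces $c\in\cP$.
\item The four lines of $\cL$ through $c$ then form two orbits of size $2$. Hence $n=[\ell\in\cL]+4+4k'$, and $n$ odd forces $\ell\in\cL$.
\item So $\ell$ carries exactly four points of $\cP$, in two orbits of size $2$. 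This gives $n=1+4+4k\equiv 1\pmod 4$, contradicting $n\equiv 3\pmod 4$.
\end{itemize}
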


Both configurations of Theorem~\ref{thm:main} therefore have the largest possible symmetry group.  The third main result is that they are the only ones with this property.

\begin{theorem}\label{thm:classification}
Up to projective equivalence, the two configurations of Theorem~\ref{thm:main} are the only geometric $(23_4)$ configurations whose collineation group contains a Klein four-group.  The same statement holds for $(23_4)$ configurations in the complex projective plane.  Both configurations are self-polar, $(\cP_A,\cL_A)$ with respect to $x^2+6yz=0$ and $(\cP_B,\cL_B)$ with respect to the conic of Theorem~\ref{thm:main}(B).
\end{theorem}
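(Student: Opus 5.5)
The plan is to turn the Klein four-group hypothesis into a finite list of combinatorial cases and then solve each case exactly. Let $V\cong\ZZ_2\times\ZZ_2$ act on a $(23_4)$ configuration $\cC$ over $\CC$. A nontrivial projective involution of $\PP^2(\CC)$ is a harmonic homology: it has a fixed line $\ell$ and an isolated fixed point $c\notin\ell$.

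The three involutions of $V$ commute. Their centres and axes therefore form a self-polar triangle: the centres are the vertices $O_1,O_2,O_3$, the axes are the opposite sides, and we may normalise the action as $\operatorname{diag}(\pm1,\pm1,1)$. Every $V$-orbit on points or lines then has size $1$, $2$ or $4$. An orbit of size $1$ is a vertex or a side of the triangle, and an orbit of size $2$ lies on a side or passes through a vertex. Since $23$ is odd, at least one point and at least one line are fixed. Counting incidences along the lines through a fixed vertex forces the orbit structure of points and of lines to be one of a short list of partitions of $23$, such as the sizes $1,2,2,2,4,4,4,4$ realised by $(\cP_A,\cL_A)$. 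I would enumerate these orbit types first, using the counting behind Theorem~\ref{thm:symmetry}.

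Next I would enumerate the possible $V$-invariant incidence structures. For each orbit type, a $V$-equivariant $(23_4)$ incidence structure is determined by an incidence matrix between point orbits and line orbits, subject to the constraints imposed by the stabilisers. Besides the $4$-regularity, two points may share at most one line. In addition, a point on a side of the triangle lies on that side's line only if the side is in $\cL$, and a line through a vertex must meet the vertex's two orbit partners compatibly. A computer search, with symmetry breaking by the normaliser of $V$ acting on orbit labels, yields a finite list of combinatorial candidates; I expect a few thousand.

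For each candidate I would parametrise the configuration. Each point orbit of size $4$ is $(\pm a:\pm b:1)$, and orbits on sides or through vertices need one or zero parameters. Lines are parametrised in the same way. Each incidence then becomes one bilinear equation in a point parameter and a line parameter. With the triangle normalised, only the scaling torus $\operatorname{diag}(s,t,1)$ remains, and using it to fix two coordinates leaves roughly $18$ unknowns.

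The degeneracy conditions must be removed by saturation: distinct points, distinct lines, no extra incidences, and nonzero coordinates where the orbit type requires them. Concretely, I would saturate by the product of the relevant $2\times2$ minors and $3\times3$ determinants, or impose non-incidences through Rabinowitsch variables $uD-1$. I would then compute a Gröbner basis over $\QQ$. I expect almost all systems to give the unit ideal. The survivors should be zero-dimensional, and their primary decomposition exhibits the solutions over $\overline{\QQ}$.

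One then checks which survivors are real and removes duplicates under the residual normaliser of $V$ in $\PGL_3$, namely permutations of the triangle vertices that preserve the orbit data. The claim is that exactly two classes remain, matching (A) and (B). Self-polarity is then a direct verification: for (A), one checks that the polarity of $x^2+6yz$, which commutes with $V$ because this conic is invariant, maps $\cP_A$ onto $\cL_A$; (B) is self-polar by construction. The complex statement follows because the whole computation is done over $\QQ$ and no solution is discarded for being nonreal except the real check at the end, and that check finds no additional nonreal solutions.

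The main obstacle is the Gröbner and saturation step over thousands of systems in $18$ variables. Two things make it feasible. First, I would exploit the bilinearity by eliminating the line parameters linearly: each line is determined by two of its points. Second, I would run everything modulo a couple of large primes first, to find the systems that are nontrivial, and recompute only those over $\QQ$. Certifying completeness requires exact rational computation, or a rigorous argument that a unit ideal mod $p$ implies a unit ideal over $\QQ$; the second holds by lifting the certificate representation of $1$.
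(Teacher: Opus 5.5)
Your route is the paper's: orbit counting as in Proposition~\ref{prop:structure}, enumeration of $V$-invariant quotient structures, bilinear systems in $18$ unknowns after the torus normalisation, saturation by the non-degeneracy conditions, and Gr\"obner bases, with the surviving systems giving $(\cP_A,\cL_A)$ and $(\cP_B,\cL_B)$. The genuine gap is in how you certify completeness. You plan to discard the systems that are inconsistent modulo a large prime, justified by lifting a certificate of $1$ from $\FF_p$, and that implication is false. Lifting $1=\sum\bar a_i\bar f_i$ to integer polynomials gives only $\sum a_if_i=1+ph$. This puts $1$ in the ideal generated by the $f_i$ \emph{and} $p$, not in the ideal of the $f_i$ over $\QQ$. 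For example, $(px-1)\subset\QQ[x]$ is the unit ideal modulo $p$ but has the solution $x=1/p$. Likewise, after saturation, a genuine solution over $\overline{\QQ}$ whose reduction lands in the degeneracy locus has a non-$p$-integral Rabinowitsch variable $u=1/D$, and so it vanishes modulo $p$. Modular inconsistency only makes a solution unlikely. That is exactly why the paper's Theorem~\ref{thm:selfpolar}(b), which rests on computations modulo $32003$ and $1{,}000{,}003$, is stated conditionally. For Theorem~\ref{thm:classification} the paper instead computes all $4021$ saturated ideals exactly over $\QQ$, and rechecks rational certificates $1=\sum_i a_if_i$ for the $4018$ empty systems by exact arithmetic. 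You must do the same for every system, not only for those that survive modulo $p$, before the uniqueness claim follows.

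Three smaller corrections. First, an incidence matrix between point orbits and line orbits does not determine a $V$-invariant structure. For each incident pair of regular orbits you must record which element of $V$ realises the incidence (the paper's voltages). For each pair attached to a regular orbit you must record which two of its four members are incident (the coset data). Without this the signs in $1\pm u_jm_a\pm v_jn_a=0$ are undetermined, and the condition that two points share at most one line cannot in general be tested, since it depends on the voltages. The freedom in choosing orbit representatives is what lets you normalise the voltages on a spanning forest. Second, the conic $x^2+6yz=0$ is not $V$-invariant: $\operatorname{diag}(-1,1,-1)$ sends it to $x^2-6yz=0$, and conjugation by $\pi_A$ interchanges $\sigma_2$ and $\sigma_3$. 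Self-polarity of $(\cP_A,\cL_A)$ is still a direct check that $\pi_A(\cP_A)=\cL_A$, so only your stated reason is wrong. Third, to pass from classes over $\CC$ to classes over $\RR$, note, as the paper does via Lemma~\ref{lem:general}, that a projectivity carrying four real points in general position to real points is itself real.
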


Imposing exactly the symmetry allowed by Theorem~\ref{thm:symmetry}, a search over finite fields produced (B), and an exact enumeration of ``quotient structures'' produced (A) and confirms the answer; at $n=22$ and $n=26$ it recovers Cuntz's configurations and finds new ones.  But these are not the only $(23_4)$ configurations.  Applied to self-polar configurations with a single involution, the same method finds $50$ more, three of them with rational coordinates (Theorem~\ref{thm:selfpolar}).  Computing exactly three one-parameter families through them, we obtain infinitely many geometric $(23_4)$ configurations, among them infinitely many self-polar ones with rational coordinates and no symmetry (Theorem~\ref{thm:families}).

Theorem~\ref{thm:main} is proved in Section~\ref{sec:config} by exact computations in $\ZZ$ and in $\QQ(\tau)$, short enough to be checked by hand, Theorem~\ref{thm:symmetry} in Section~\ref{sec:symmetry} by counting orbits, and Theorem~\ref{thm:classification} in Section~\ref{sec:classification} by solving $4021$ polynomial systems exactly with Gr\"obner bases.

\section{Background}\label{sec:background}

We work in the real projective plane $\PP^2(\RR)$, whose points and lines are the one- and two-dimensional subspaces of $\RR^3$; a point is written $(x{:}y{:}z)$ and a line $[a{:}b{:}c]$, and the point lies on the line if and only if $ax+by+cz=0$.  Two points $(x{:}y{:}z)$ and $(x'{:}y'{:}z')$ coincide if and only if their cross product vanishes, and dually for lines.

\begin{definition}
A \emph{combinatorial $(n_k)$ configuration} is an incidence structure with $n$ points and $n$ lines in which every point is incident with $k$ lines, every line with $k$ points, and two points are incident with at most one common line.  Its \emph{Levi graph} is the bipartite graph on the points and lines whose edges are the incidences; it is $k$-regular of girth at least $6$.  A \emph{geometric $(n_k)$ configuration} is a combinatorial one whose points and lines are realized by distinct points and lines of $\PP^2(\RR)$ with exactly the prescribed incidences.
\end{definition}

The combinatorial existence problem is easy, since combinatorial $(n_4)$ configurations exist for all $n\ge13$.  The geometric problem is not, because a combinatorial configuration may have no realization by straight lines, and deciding realizability amounts to solving a system of polynomial equations.  In this paper ``configuration'' means geometric configuration unless stated otherwise.

\begin{definition}
Two configurations are \emph{projectively equivalent} if a projectivity of $\PP^2(\RR)$, i.e.\ an element of $\PGL_3(\RR)$, carries the points of one onto the points of the other and hence the lines onto the lines.  A \emph{collineation} of a configuration is a projectivity carrying its point set to itself; the collineations form a finite group, the \emph{symmetry group} of the configuration.  A \emph{correlation} is a projective map carrying points to lines and lines to points and preserving incidence; it is a \emph{polarity} if it is an involution, in which case it is the polarity with respect to a conic, given by a symmetric matrix.  A configuration is \emph{self-dual} if some correlation carries its points onto its lines and \emph{self-polar} if some polarity does.
\end{definition}

\begin{definition}
The \emph{Klein four-group} is the group $V=\ZZ_2\times\ZZ_2$.  Any Klein four-group of projectivities of $\PP^2(\RR)$ is conjugate to the group of diagonal matrices $\operatorname{diag}(\pm1,\pm1,\pm1)$, whose three nontrivial elements $\sigma_1,\sigma_2,\sigma_3$ are the harmonic homologies with centres the coordinate points $O_1,O_2,O_3$ and axes the opposite coordinate lines $\ell_1,\ell_2,\ell_3$.  A configuration is \emph{Klein-symmetric} if its symmetry group contains a Klein four-group.
\end{definition}

The \emph{cross-ratio} of four collinear points is invariant under projectivities, and a configuration whose points have coordinates in a field $K$ has all its cross-ratios in $K$; this is how fields of definition are bounded from below in Section~\ref{sec:config}.  The \emph{field of definition} of a configuration is the smallest field over which some projectively equivalent configuration has all its coordinates.

\section{The two configurations}\label{sec:config}

\subsection{The integral configuration}
The $23$ points \eqref{eq:pointsA} and the $23$ lines \eqref{eq:linesA} are given by primitive integer vectors.  A point lies on a line if and only if the corresponding integer dot product vanishes.  We label the points $O_1=(1{:}0{:}0)$, $T_1^{\pm}=(0{:}1{:}{\pm}1)$, $T_2^{\pm}=(0{:}3{:}{\pm}2)$, $S^{\pm}=(1{:}{\pm}1{:}0)$, $R_1^{\epsilon\delta}=(3{:}2\epsilon{:}2\delta)$, $R_2^{\epsilon\delta}=(2{:}3\epsilon{:}2\delta)$, $R_3^{\epsilon\delta}=(6{:}\epsilon{:}2\delta)$, $R_4^{\epsilon\delta}=(6{:}5\epsilon{:}2\delta)$ ($\epsilon,\delta\in\{+,-\}$).  Checking all incidences amounts to $23\cdot23=529$ integer dot products, and one finds that each line contains exactly four of the points, each point lies on exactly four of the lines, and no two points lie on two common lines.  For instance, the line $[2{:}6{:}-9]$ contains $T_2^{+}=(0{:}3{:}2)$ ($18-18=0$), $R_1^{++}=(3{:}2{:}2)$ ($6+12-18=0$), $R_3^{++}=(6{:}1{:}2)$ ($12+6-18=0$) and $R_4^{--}=(6{:}-5{:}-2)$ ($12-30+18=0$), and no other point of $\cP_A$.  The points are visibly pairwise distinct, and so are the lines.  This proves part (A) of Theorem~\ref{thm:main}; we also computed the same $529$ products with a short script.

The configuration is invariant under the Klein four-group $V=\{\operatorname{diag}(\pm1,\pm1,\pm1)\}$ (Section~\ref{sec:background}), whose orbits on $\cP_A$ have sizes $1,2,2,2,4,4,4,4$; by Theorem~\ref{thm:symmetry}, $V$ is the full collineation group of the configuration.  It is also self-polar.  The polarity $\pi_A\colon(x{:}y{:}z)\mapsto[x{:}3z{:}3y]$ of the conic $x^2+6yz=0$ maps $\cP_A$ onto $\cL_A$, as one reads off from \eqref{eq:pointsA} and \eqref{eq:linesA} (for instance $(3{:}2{:}2)\mapsto[1{:}2{:}2]$ and $(6{:}5{:}2)\mapsto[2{:}2{:}5]$).  So $\cL_A$ is simply the set of polars of the points \eqref{eq:pointsA}, and the configuration is determined by its points and one conic, exactly as $(\cP_B,\cL_B)$ is.

\subsection{Homogeneous description of the self-polar configuration}
The chart of Theorem~\ref{thm:main}(B) is $y=1$, $(X,Z)=(x/y,\,z/y)$.  The conic $X^2-2Z^2=2$ is the zero set of the quadratic form with Gram matrix $B=\operatorname{diag}(-1,2,2)$, and the polar line of a point $p$ is $\{q: p^{\mathsf T}Bq=0\}$.  Thus $\cL_B=\{\pi(p):p\in\cP_B\}$ where $\pi$ is the polarity of $B$, and a point $q\in\cP_B$ lies on the line $\pi(p)$ if and only if
\begin{equation}\label{eq:form}
-\,x_px_q+2y_py_q+2z_pz_q=0 .
\end{equation}
Since $B$ is symmetric, $q\in\pi(p)$ if and only if $p\in\pi(q)$.  Consequently the incidence structure $(\cP_B,\cL_B)$ is encoded by the graph $\Gamma$ on $\cP_B$ (with loops allowed) in which $p\sim q$ when \eqref{eq:form} holds, so that the four points on $\pi(p)$ are the neighbours of $p$, and $(\cP_B,\cL_B)$ is a $(23_4)$ configuration if and only if $\Gamma$ is $4$-regular (loops counted once) and two distinct vertices never have two common neighbours.  There are no loops here, i.e.\ no point of $\cP_B$ lies on its own polar.

\begin{table}[t]
\centering
\begin{tabular}{@{}llll@{}}
\toprule
label & point $(x:y:z)$ & label & point $(x:y:z)$\\
\midrule
$O_1$ & $(1:0:0)$ & $S^{\pm}$ & $(\pm2:1:0)$\\
$T_1^{\pm}$ & $(0:1:\pm\tau)$ & $C_1^{\epsilon\delta}$ & $(\epsilon:1:\delta(\tau+\tau^{-1}))$\\
$T_2^{\pm}$ & $(0:1:\pm\tau^{-1})$ & $C_2^{\epsilon\delta}$ & $(2\epsilon\tau^{2}:1:\delta\tau)$\\
 & & $C_3^{\epsilon\delta}$ & $(\epsilon:1:\delta(\tau-\tau^{-1}))$\\
 & & $C_4^{\epsilon\delta}$ & $(2\epsilon\tau^{-2}:1:\delta\tau^{-1})$\\
\bottomrule
\end{tabular}
\medskip
\caption{The $23$ points, $\epsilon,\delta\in\{+,-\}$.  Here $\tau^2=(1+\sqrt{17})/4$, $\tau^{-2}=(\sqrt{17}-1)/4$, $\tau^{-1}=\tau^3-\tau/2$, $(\tau+\tau^{-1})^2=(4+\sqrt{17})/2$ and $(\tau-\tau^{-1})^2=(\sqrt{17}-4)/2$. Table created by the author.}
\label{tab:points}
\end{table}

\subsection{Proof of Theorem \ref{thm:main}(B)}

\begin{proof}[Proof of Theorem~\ref{thm:main}(B)]
All coordinates in Table~\ref{tab:points} lie in the field $K=\QQ(\tau)$, which has degree $4$ over $\QQ$ (the minimal polynomial $2t^4-t^2-2$ is irreducible, its roots being $\pm\tau$ and $\pm i\tau^{-1}$; $K$ has two real embeddings and one pair of complex ones).  Evaluating the bilinear form \eqref{eq:form} on the $\binom{23}{2}+23=276$ pairs of points is a finite computation in $K$, in which every element is a polynomial of degree at most $3$ in $\tau$ with rational coefficients, reduced using $\tau^4=\tfrac12\tau^2+1$.  The result is an incidence table in which every point has exactly four neighbours, no point is its own neighbour, and any two distinct points have at most one common neighbour.  Moreover the $23$ points are pairwise distinct (they are given in normalized form, and the four generic orbits are distinguished already by the absolute values of their coordinates), and since $B$ is nondegenerate, distinct points have distinct polars.  It follows that $(\cP_B,\cL_B)$ is a geometric $(23_4)$ configuration, and its coordinates are real under the embedding $\tau\mapsto\sqrt{(1+\sqrt{17})/4}$.  The two configurations of Theorem~\ref{thm:main} are not isomorphic, since the automorphism groups of their Levi graphs (points and lines kept apart) have orders $4$ and $8$ respectively.  In Table~\ref{tab:points} every $x$- and $y$-coordinate lies in $\QQ(\sqrt{17})=\QQ(\tau^2)$ and every $z$-coordinate lies in $\tau\QQ(\sqrt{17})$, so the diagonal projectivity $z\mapsto z/\tau$ maps $\cP_B$ to the points \eqref{eq:points17}, maps $\cL_B$ to lines with coefficients in $\QQ(\sqrt{17})$, and maps the conic $X^2-2Z^2=2$ to $X^2-2\tau^2Z^2=2$, which is the conic stated; we verified all of this exactly.  Finally, the cross-ratio of the four points $(0,\pm1)$, $(0,\pm\frac{\sqrt{17}-1}{4})$ of \eqref{eq:points17} on the line $X=0$ is $33-8\sqrt{17}\notin\QQ$ (in fact $19$ of the $23$ collinear quadruples have irrational cross-ratio); since cross-ratios are projective invariants, no projectively equivalent configuration has all its coordinates in $\QQ$.  This completes the proof of Theorem~\ref{thm:main}(B).
\end{proof}

\subsection{Symmetry of the self-polar configuration}
In the chart, the Klein four-group $V$ of Section~\ref{sec:background} acts by the reflections $(X,Z)\mapsto(\pm X,\pm Z)$.  Table~\ref{tab:points} is visibly $V$-invariant, and $V$ commutes with $B$, so $V$ acts on $(\cP_B,\cL_B)$.  The $V$-orbits on $\cP_B$ have sizes $1,2,2,2,4,4,4,4$, namely the fixed point $O_1$, the pairs $T_1,T_2$ on the axis $\ell_1=\{x=0\}$ (each pair fixed by $\sigma_1$), the pair $S$ on the axis $\ell_3=\{z=0\}$, and the four regular orbits $C_1,\dots,C_4$.  By Theorem~\ref{thm:symmetry}, $V$ is the full collineation group of the configuration.  The automorphism group of the Levi graph with points and lines kept apart has order $8$ and contains $V$ with index $2$; the four automorphisms outside $V$ are induced by no collineation, as Theorem~\ref{thm:symmetry} demands (they come from the Galois conjugation $\sqrt{17}\mapsto-\sqrt{17}$).

\section{Symmetry groups of $(n_4)$ configurations}\label{sec:symmetry}

\begin{lemma}\label{lem:general}
Every geometric $(n_4)$ configuration $(\cP,\cL)$ contains four points in general position.
\end{lemma}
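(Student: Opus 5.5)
We want four points of $\cP$, no three of them collinear. The plan is to build such a quadruple directly from the incidence structure, using only that each point lies on four lines, each line carries four points, and two lines meet in at most one point.

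First pick any line $L\in\cL$ and two distinct points $p,q$ on $L$. Through $p$ pass three further lines of $\cL$ besides $L$, and each carries three points of $\cP$ other than $p$. None of these points lies on $L$, since such a line meets $L$ only at $p$. Choose one of these lines, $M\ni p$, and a point $r\in M\setminus\{p\}$. Then $r\notin L$, so $p,q,r$ are not collinear. Note that $r$ lies off $L$, and $M$ is the unique line through $p$ and $r$.

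Next we need a fourth point $s$ avoiding the three lines $pq$, $pr$, $qr$ of $\PP^2(\RR)$. The lines $pq=L$ and $pr=M$ are in $\cL$. The line $qr$ may or may not belong to $\cL$; if it does not, it contains at most... we only need it to miss some candidate. The candidates for $s$ are the points on a second line $N\ne L,M$ through $p$. Such a line carries three points $s_1,s_2,s_3$ other than $p$, none of which lies on $L$ or $M$, because those lines meet $N$ only at $p$.

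It remains to avoid the line $qr$. This line is distinct from $N$: otherwise $q\in N$, but $q$ lies on $L$, and $N$ meets $L$ only at $p$. So $qr$ meets $N$ in at most one point. Among $s_1,s_2,s_3$, therefore, at most one lies on $qr$, and we take $s$ to be one of the others. Then $p,q,r,s$ has no three collinear:
\begin{itemize}
\item $pqr$ is non-collinear by construction;
\item $s\notin pq\cup pr\cup qr$, which handles the triples $pqs$, $prs$, $qrs$.
\end{itemize}

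The only delicate point is whether $N$ can coincide with $qr$, and this was excluded above. All the counting uses only $k=4\ge3$ (in fact $k\ge 3$ suffices), so no real obstacle is expected. The argument is purely combinatorial, and distinctness of the realized points and lines is what guarantees that distinct lines of $\cL$ meet in at most one point of $\cP$.
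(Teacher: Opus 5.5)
Your proof is correct, but it takes a different route from the paper's.

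\textbf{The paper's argument.} It proceeds globally and by contradiction, in two steps.
\begin{enumerate}
\item If all of $\cP$ lay on one line, no other line could carry four points. Hence there is a non-collinear triple $a,b,c$.
\item If all of $\cP$ lay on the three sides $ab$, $bc$, $ca$, every other line would meet $\cP$ in at most three points. Then $\cL$ would consist of at most three lines, contradicting $n>3$.
\end{enumerate}
This uses only that every line carries four points and that there are more than three lines. It never looks at how many lines pass through a point.

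\textbf{Your argument.} You build the quadruple explicitly from three lines $L,M,N\in\cL$ through a single point $p$. The only geometric fact you need is that two distinct lines of $\PP^2(\RR)$ meet in exactly one point. This requires at least three lines of $\cL$ through $p$ and at least three points on each line. So, as you remark, it applies verbatim to $(n_3)$ configurations, where the paper's second step breaks down: with only three points per line, a line of $\cL$ could be a transversal of the triangle $abc$.

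\textbf{Comparison.} The paper's version is shorter and non-constructive, and it needs only line sizes. Yours is local and constructive, and it needs only $k\ge 3$ on both sides.

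One small edit: the half-finished aside about whether $qr\in\cL$ can be deleted. Your argument only uses that $qr$ is some line of $\PP^2(\RR)$ different from $N$, which you then justify correctly.
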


\begin{proof}
If all points of $\cP$ lay on one line $\ell$, every other line would contain at most one of them, so $\ell$ would be the only line of $\cL$.  Hence there are three non-collinear points $a,b,c\in\cP$.  If every point of $\cP$ lay on $ab$, $bc$ or $ca$, any other line would contain at most three points of $\cP$, so every line of $\cL$ would be one of these three, contradicting $n>3$.  Hence some fourth point of $\cP$ lies on none of them.
\end{proof}

We identify the real projective plane $\PP^2(\RR)$ with the sphere $S^2$ modulo antipodal identification.  A projective collineation of $\PP^2(\RR)$ is induced by an element of $\PGL_3(\RR)\cong\SL_3(\RR)$; a finite subgroup of $\SL_3(\RR)$ preserves a positive definite quadratic form (average any such form over the group) and is therefore conjugate to a finite subgroup of $\SO(3)$.  The finite subgroups of $\SO(3)$ are the cyclic groups $C_m$, the dihedral groups $D_m$ (of order $2m$), and the rotation groups $A_4$, $S_4$, $A_5$ of the Platonic solids.  Every group in this list other than $C_1$, $C_2$ and $D_2\cong\ZZ_2\times\ZZ_2$ contains a rotation of order $m\ge3$.  We will deduce Theorem~\ref{thm:symmetry} from Proposition~\ref{prop:norotation} below.

\begin{proposition}\label{prop:norotation}
Let $r\in\SO(3)$ be a rotation of order $m\ge3$, let $O$ and $\ell$ be the point and the line of $\PP^2(\RR)$ fixed by $r$, and let $(\cP,\cL)$ be an $r$-invariant geometric $(n_4)$ configuration.  Then $m\le n/2$, and one of the following holds.
\begin{enumerate}
\item[(a)] $O\in\cP$, $\ell\in\cL$, $m\in\{4,8\}$ and $n\equiv1\pmod4$, and moreover $n\equiv5\pmod8$ if $m=8$.
\item[(b)] $O\notin\cP$, $\ell\notin\cL$, and $m$ divides $n$ if $m$ is odd, while $m/2$ divides $n$ if $m$ is even.
\end{enumerate}
\end{proposition}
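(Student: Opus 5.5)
The plan is to exploit how $r$ acts on $\PP^2(\RR)=S^2/\pm$: sort the points and lines of the configuration into $\langle r\rangle$-orbits, and compare orbit counts of points and of lines modulo $m$. Write $r$ as the rotation by $2\pi/m$ about an axis. The fixed point $O$ is the axis, and $\ell$ is the image of the equator. A point of $\PP^2(\RR)$ off $\ell$ and different from $O$ is fixed by $r^k$ only if $r^k=1$, since a nontrivial rotation fixes only its axis and, when it is the half-turn, the equator. Such points therefore have orbits of size exactly $m$; call them \emph{generic}.

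On $\ell$, $r$ acts on directions in the equatorial plane. There $r^k$ acts trivially exactly when $2\pi k/m\equiv0\pmod\pi$. So the orbits on $\ell$ all have size $m'$, where $m'=m$ if $m$ is odd and $m'=m/2$ if $m$ is even; in particular $m'\ge2$ for $m\ge3$. Dually, via the polarity of the $r$-invariant form $x^2+y^2+z^2$, which commutes with $r$ and swaps $O$ and $\ell$, the same holds for lines. Thus $\ell$ is fixed, lines through $O$ fall into orbits of size $m'$, and all other lines into orbits of size $m$.

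First I would show that $O\in\cP$ implies $\ell\in\cL$. Suppose $O\in\cP$. The four lines of $\cL$ through $O$ form an invariant set of lines through $O$, so it is a union of orbits of size $m'$. Hence $m'\mid4$ with $m'\ge2$. This is impossible for $m$ odd, and for $m$ even it forces $m\in\{4,8\}$. Counting lines modulo $m$ then gives $n\equiv4+[\ell\in\cL]\pmod m$. Suppose $\ell\notin\cL$. Counting points modulo $m'$ gives $n\equiv1\pmod{m'}$, because $O$ contributes $1$, the points on $\ell$ contribute a multiple of $m'$, and the generic points a multiple of $m\equiv0$. Combined with $n\equiv4\pmod{m'}$ this gives $m'\mid3$, contradicting $m'\in\{2,4\}$. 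So $\ell\in\cL$, and then $n\equiv5\pmod m$. This yields (a): $n\equiv1\pmod4$, and $n\equiv5\pmod8$ when $m=8$.

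Applying the same argument to the dual configuration (the images under the polarity) shows that $\ell\in\cL$ implies $O\in\cP$. Consequently, if $O\notin\cP$ then $\ell\notin\cL$. In that case every point of $\cP$ is on $\ell$ or generic, so $n\equiv0\pmod{m'}$, which is exactly (b).

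For $m\le n/2$, I would first find a generic line $L\in\cL$, i.e.\ $L\ne\ell$ and $O\notin L$. If none existed, every point of $\cP$ other than $O$ would lie on $\ell$: such a point lies on four lines, and at most one of them passes through $O$, so another one would have to be $\ell$. This contradicts Lemma~\ref{lem:general}. The line $L$ meets $\ell$ in one point and does not contain $O$, so it contains at least three generic points of $\cP$. A single generic orbit lies on one $r$-invariant conic $x^2+y^2=cz^2$, which $L$ meets in at most two points. Hence $\cP$ contains at least two generic orbits, and $n\ge2m$.

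The main obstacle I expect is the bookkeeping in the first step: making sure that in case $O\in\cP$ no line through $O$ is fixed by $r$, that the point count on $\ell$ is really a multiple of $m'$, and that the dualization is legitimate. The last point needs the polarity of an $r$-invariant form to commute with $r$, which holds because $r\in\SO(3)$.
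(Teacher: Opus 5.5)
Your proof is correct and follows the paper's argument. You use the same orbit decomposition (sizes $1$, $m'$, $m$), the same conic argument giving two generic orbits and hence $n\ge 2m$, and the same step from $m'\mid 4$ to a comparison of the point and line counts that forces $\ell\in\cL$; the differences are cosmetic: you compare the counts modulo $m'$ where the paper uses parity, you obtain $\ell\in\cL\Rightarrow O\in\cP$ from the polarity of $x^2+y^2+z^2$ instead of repeating the count, and you produce a generic line via Lemma~\ref{lem:general} where the paper simply notes that at most one of the four lines through a point other than $O$ passes through $O$ and at most one is $\ell$.
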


\begin{proof}
We first describe the orbits of $\langle r\rangle$.  The point $O$ is the image of the axis of $r$, and $\ell$ is the image of the great circle orthogonal to it.  Let $s=m$ if $m$ is odd and $s=m/2$ if $m$ is even.  Note that $r^j$ fixes a point of $\ell$ if and only if it maps that point to itself or to its antipode, that is, if and only if $2j\equiv0\pmod m$.  It follows that every orbit of $\langle r\rangle$ on the points of $\ell$ has size $s$, and by polarity the same holds for the lines through $O$.  Every other point and every other line has an orbit of size $m$, and such an orbit of points lies on the image of a circle of latitude with respect to the axis, which is a conic.

Let $a=[O\in\cP]$, let $b$ be the number of orbits of points of $\cP$ on $\ell$, and let $c$ be the number of remaining orbits of points of $\cP$; define $a'=[\ell\in\cL]$, $b'$ and $c'$ dually.  Counting orbit by orbit, we obtain
\begin{equation}\label{eq:count}
n=a+bs+cm=a'+b's+c'm.
\end{equation}

We first show that $c\ge2$.  A point of $\cP$ other than $O$ lies on four lines of $\cL$, of which at most one passes through $O$ and at most one is $\ell$, so there is a line $g\in\cL$ with $g\ne\ell$ and $O\notin g$.  The line $g$ meets $\ell$ in one point, and each of the $c$ remaining orbits lies on a conic and so meets $g$ in at most two points, so $4\le1+2c$, and hence $c\ge2$.  In particular $n\ge cm\ge2m$.

Suppose that $a=1$.  The four lines of $\cL$ through $O$ form a union of orbits of size $s$, so $s$ divides $4$.  Since $s=m\ge3$ is odd when $m$ is odd, $m$ is even and $s=m/2\in\{2,4\}$, so $m\in\{4,8\}$ and $b's=4$.  If $\ell\notin\cL$, then \eqref{eq:count} gives $n=4+c'm$, which is even, whereas $n=1+bs+cm$ is odd because $s$ and $m$ are even.  Hence $\ell\in\cL$, the four points of $\cP$ on $\ell$ form orbits of size $s$, so $bs=4$ and $n=5+cm$.  This is (a).  Conversely, if $\ell\in\cL$ then $bs=4$, so again $m$ is even, and $O\notin\cP$ would make $n=4+cm$ even but $n=1+b's+c'm$ odd; hence $\ell\in\cL$ implies $O\in\cP$.

Suppose finally that $a=0$.  Then $\ell\notin\cL$ by the previous paragraph, and $n=bs+cm$.  If $m$ is odd then $s=m$ divides $n$, and if $m$ is even then $n=\frac m2(b+2c)$.  This is (b), and the proof is complete.
\end{proof}

\begin{proof}[Proof of Theorem~\ref{thm:symmetry}]
Let $G$ be the group of projective collineations of $\PP^2(\RR)$ mapping $\cC$ onto itself.  By Lemma~\ref{lem:general}, $\cC$ contains four points in general position, and a projectivity fixing four points in general position is the identity.  Thus $G$ acts faithfully on the $n$ points of $\cC$, so $G$ is finite, and after applying a projectivity to $\cC$ we may assume $G\subset\SO(3)$.  Every finite subgroup of $\SO(3)$ other than $C_1$, $C_2$ and $D_2\cong\ZZ_2\times\ZZ_2$ contains a rotation of order $m\ge3$, so it suffices to show that $\cC$ is invariant under no such rotation.  Case (a) of Proposition~\ref{prop:norotation} requires $n\equiv1\pmod4$.  In case (b), either $m$ is odd and divides $n$ with $3\le m\le n/2$, or $m/2$ divides $n$ with $2\le m/2\le n/4$; either way $n$ is not prime.  This completes the proof of Theorem~\ref{thm:symmetry}.
\end{proof}

For every other $n\ge13$ some $m\ge3$ satisfies the conditions of Proposition~\ref{prop:norotation} ($m=4$ if $n$ is even or $n\equiv1\pmod4$, an odd prime divisor of $n$ otherwise), so the primes $n\equiv3\pmod4$ are exactly the values at which every rotation is excluded.  For $n\ge18$ they are $19,23,31,43,\dots$; there is no $(19_4)$ \cite{BP15}, the first $(31_4)$ and $(43_4)$ came from computer algebra and optimization \cite{BS13} and from gluing \cite{BP16}, and $23$ was the last case.  The cases $22$, $26$ and $37$ were also among the last, so Proposition~\ref{prop:norotation} is not the only difficulty.  The argument also applies to $5$-configurations.

\begin{proposition}\label{prop:five}
Let $n$ be a prime with $n\not\equiv1\pmod5$.  Then the collineation group of a geometric $(n_5)$ configuration is trivial, of order two, or a Klein four-group.
\end{proposition}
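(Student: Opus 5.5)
We follow the proof of Theorem~\ref{thm:symmetry}, replacing $4$ by $5$ in Proposition~\ref{prop:norotation}.

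\emph{Faithfulness and reduction.} Lemma~\ref{lem:general} holds for $(n_5)$ configurations with the same proof, since $n>3$ and each line has $5\ge3$ points. So the collineation group $G$ acts faithfully on the points, is finite, and after conjugation lies in $\SO(3)$. Every finite subgroup of $\SO(3)$ other than $C_1$, $C_2$ and $D_2$ contains a rotation of order $m\ge3$. It therefore suffices to show that no $r$ of order $m\ge3$ preserves a $(n_5)$ configuration when $n$ is a prime with $n\not\equiv1\pmod5$.

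\emph{Orbit count.} Keep the notation $O,\ell,s,a,b,c,a',b',c'$ of Proposition~\ref{prop:norotation}. Orbits on $\ell$, and dually on lines through $O$, have size $s$, and all other orbits have size $m$ and lie on conics. As before $n=a+bs+cm=a'+b's+c'm$.

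Choose a point of $\cP$ other than $O$. Of its five lines, at most one passes through $O$ and at most one is $\ell$, so some line $g\in\cL$ has $g\neq\ell$ and $O\notin g$. Counting the points of $\cP$ on $g$ gives $5\le1+2c$, so $c\ge2$ and $m\le n/2$.

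\emph{Case $a=1$.} The five lines through $O$ form a union of orbits of size $s$, so $s\mid5$. Since $s\ge2$ (if $m$ is odd then $s=m\ge3$; if $m$ is even then $s=m/2\ge2$), we get $s=5$, so $m\in\{5,10\}$.
\begin{itemize}
\item Suppose $\ell\notin\cL$. Then $n=5+c'm$ is divisible by $5$, and $n\ge2m\ge10$, so $n$ is not prime.
\item Suppose $\ell\in\cL$. Then $n=1+5+c'm\equiv1\pmod5$, which is excluded by hypothesis.
\end{itemize}

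\emph{Case $a=0$.}
\begin{itemize}
\item If $\ell\in\cL$, then the five points on $\ell$ form orbits of size $s$, so $s=5$. This gives $n=bs+cm$, a multiple of $5$, and $n\ge10$, so $n$ is not prime.
\item If $\ell\notin\cL$, then $n=bs+cm$ is a multiple of $s$, because $s\mid m$. Here $2\le s\le m\le n/2<n$, so $n$ is not prime.
\end{itemize}
This last case is exactly case~(b) of Proposition~\ref{prop:norotation}.

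\emph{Main obstacle.} The only new work is the parity bookkeeping of case~(a). For $k=4$ parity forced $O\in\cP$ and $\ell\in\cL$ to occur together. For $k=5$ we instead handle the mixed cases ($O\in\cP$ with $\ell\notin\cL$, and $O\notin\cP$ with $\ell\in\cL$) by divisibility by $5$, as above.

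We must still confirm $n\ge10$ in these cases, since $n=5$ is prime. This follows from $c\ge2$, which gives $n\ge cm\ge2m\ge10$.

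In every case $n$ is either composite or $\equiv1\pmod5$. Hence no rotation of order $m\ge3$ occurs, and $G$ is trivial, of order two, or a Klein four-group.
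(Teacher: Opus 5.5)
Your proof is correct and follows the paper's argument: the same orbit count, the bound $5\le 1+2c$ giving $m\le n/2$, $s\mid 5$ forcing $s=5$ when $O\in\cP$, and $s\mid n$ with $2\le s\le n/2$ otherwise. The only difference is cosmetic. The paper compares the point and line counts modulo $5$ to show that $O\in\cP$ forces $\ell\in\cL$, whereas you dispose of the mixed cases directly by noting that $n$ is then a multiple of $5$ with $n\ge 10$.
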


\begin{proof}
Lemma~\ref{lem:general} and the proof of Theorem~\ref{thm:symmetry} apply verbatim, and in the proof of Proposition~\ref{prop:norotation} the bound $5\le1+2c$ gives $c\ge2$ and $m\le n/2$.  If $O\in\cP$, the five lines through $O$ form orbits of size $s$, so $s=5$, and case (a) with congruences modulo $5$ in place of parity gives $\ell\in\cL$ and $n=6+cm\equiv1\pmod5$; otherwise $s$ divides $n$ with $2\le s\le n/2$.
\end{proof}

\section{The search method}\label{sec:search}

Theorem~\ref{thm:symmetry} suggested looking for configurations with the largest possible group, $V$, and, to shrink the search further, imposing a polarity (which reduces the number of unknowns from $18$ to $11$).  We first searched for $V$-symmetric self-polar $(23_4)$ configurations in the finite projective planes $\PP^2(\FF_p)$ for the primes $p=11,13,17,19,23$.  For each diagonal form and each choice of orbits, a constraint solver (Google OR-Tools CP-SAT \cite{ortools}) searched for a $V$-invariant set of $23$ points whose polar graph is $4$-regular without $4$-cycles.  This produced $17$ combinatorially distinct configurations.  Exactly one type occurred for more than one prime ($p=13,17,19$, always with a Levi graph automorphism group of order $8$), and it was the natural candidate for a configuration over a number field $K$, since such a configuration reduces modulo every prime of $K$ of degree one, so its type reappears for a positive proportion of primes, whereas a mod-$p$ accident does not.  Lifting confirmed this.  The polynomial system attached to the recurring type has eight complex solutions (two choices of sign of $\sqrt{-2}$, absorbed by a rescaling, times the four embeddings of $\QQ(\tau)$), all giving $(\cP_B,\cL_B)$, and the systems attached to the other $16$ types have no complex solutions.  (Consistently with the field $\QQ(\tau)$, the recurring type appeared exactly for those primes in the list for which $17$ is a square modulo $p$.)  The general classification of Section~\ref{sec:classification} then produced $(\cP_A,\cL_A)$, which the search had missed because it only used diagonal Gram matrices, whereas the Gram matrix of the polarity of $(\cP_A,\cL_A)$, $x^2+6yz$, interchanges $\sigma_2$ and $\sigma_3$.

\section{Klein-symmetric configurations}\label{sec:classification}

In Proposition~\ref{prop:structure} $\cC=(\cP,\cL)$ is a geometric $(n_4)$ configuration with $n\equiv3\pmod4$, over $\RR$ or over $\CC$, which is invariant under a Klein four-group $V$ of collineations, and after it $n=23$.  Choosing coordinates in which $V=\{\operatorname{diag}(\pm1,\pm1,\pm1)\}$, we write $\sigma_i$ for the element of $V$ with $+1$ in position $i$, $O_1,O_2,O_3$ for the three fixed points of $V$ (the coordinate points) and $\ell_i$ for the coordinate lines $\{x_i=0\}$.  A point is fixed by $\sigma_i$ if and only if it is $O_i$ or lies on $\ell_i$; every point not on $\ell_1\cup\ell_2\cup\ell_3$ has a regular orbit of size $4$, and every point of $\ell_i\setminus\{O_j,O_k\}$ has an orbit of size $2$ (a \emph{pair}, its ``mate'' being its image under $\sigma_j$).  Dually, a line is fixed by $\sigma_i$ if and only if it is $\ell_i$ or passes through $O_i$; lines through $O_i$ other than $\ell_j,\ell_k$ form pairs, and all other lines have regular orbits.  We write $c$ and $c'$ for the numbers of regular orbits of points and of lines in $\cC$, and $b_i$, $b'_i$ for the numbers of pairs on $\ell_i$ and through $O_i$.

\subsection{Structure}\label{sec:structure}
\begin{proposition}\label{prop:structure}
After renumbering the coordinates, the following hold.
\begin{enumerate}
\item[(a)] $O_1\in\cP$, $O_2,O_3\notin\cP$, $\ell_1\in\cL$, $\ell_2,\ell_3\notin\cL$; the line $\ell_1$ contains exactly two pairs $T_1,T_2$ of $\cP$, and the four lines of $\cL$ through $O_1$ are exactly the lines $O_1q$, $q\in T_1\cup T_2$.
\item[(b)] A point of $\cP$ on $\ell_i$, $i\in\{2,3\}$, other than $O_1$ lies on no line of $\cL$ through $O_1$, $O_2$ or $O_3$, and lies on exactly two pairs $\{g,\sigma_ig\}$ of regular lines of $\cL$, coming from two different regular orbits.  Dually, a line of $\cL$ through $O_2$ or $O_3$ other than $\ell_1$ contains no point of $\cP$ on $\ell_1$, $\ell_2$ or $\ell_3$, and contains exactly two pairs of regular points of $\cP$, from two different regular orbits.
\item[(c)] $b_2+b_3=\frac{n-5}2-2c$ and $b'_2+b'_3=\frac{n-5}2-2c'$, with $2b_2,2b_3\le c'$ and $2b'_2,2b'_3\le c$.  For $n=23$, $(c,c')\in\{(4,4),(4,3),(3,4)\}$; if $c=4$ then $b_2+b_3=1$, and if $c=3$ then $\{b_2,b_3\}=\{1,2\}$; dually for $c'$ and $b'_2,b'_3$.
\end{enumerate}
\end{proposition}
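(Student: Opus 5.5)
The plan is to count orbits of $V$ on points and lines, exactly as in the proof of Proposition~\ref{prop:norotation}, and to use the simple local structure of an involution acting on a line or on a pencil. Let $a$ (resp.\ $a'$) be the number of fixed points $O_i$ in $\cP$ (resp.\ coordinate lines $\ell_i$ in $\cL$). Then
\[
n=a+2(b_1+b_2+b_3)+4c=a'+2(b'_1+b'_2+b'_3)+4c'.
\]
Since $n$ is odd, $a$ and $a'$ are odd.

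The basic local tool is the following. If $O_i\in\cP$, the four lines of $\cL$ through $O_i$ are permuted by $V$. The fixed ones among them are $\ell_j$ and $\ell_k$, and the others come in pairs, so
\[
4=[\ell_j\in\cL]+[\ell_k\in\cL]+2b'_i,
\]
which forces $\ell_j,\ell_k$ to be both in $\cL$ or both out. Dually for $\ell_i\in\cL$.

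\textbf{Step (a).} I would first exclude $a=3$. By the local tool, either all $\ell_i\in\cL$ or none is.
\begin{itemize}
\item If all are in $\cL$, each $\ell_i$ carries $O_j,O_k$ and one pair, so $n=3+6+4c\equiv1\pmod4$.
\item If none is in $\cL$, then $b'_i=2$ for all $i$, so $n=12+4c'$ is even.
\end{itemize}
Both contradict $n\equiv3\pmod4$, so $a=1$, say $O_1\in\cP$, and dually $a'=1$. Since $a'=1$, the lines $\ell_2,\ell_3$ cannot both lie in $\cL$, so both are out and $\ell_1\in\cL$. Then $b'_1=2$, and dually $\ell_1$ carries exactly two pairs $T_1,T_2$.

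For the last claim of (a), take a line $g\in\cL$ through $O_1$. The involution $\sigma_1$ acts on $g$ with fixed points $O_1$ and $g\cap\ell_1$. The three points of $\cP$ on $g$ other than $O_1$ cannot all come in $\sigma_1$-pairs, so $g\cap\ell_1\in\cP$. The four lines through $O_1$ meet $\ell_1$ in four distinct points of $\cP$, which are therefore exactly $T_1\cup T_2$.

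\textbf{Step (b).} Take $p\in\cP\cap\ell_2$ with $p\ne O_1$; note $p\ne O_3$ since $O_3\notin\cP$. Since $O_1,O_3\in\ell_2$, the lines $pO_1$ and $pO_3$ both equal $\ell_2\notin\cL$.
\begin{itemize}
\item The line $pO_2$ is not in $\cL$. On $pO_2$ the only fixed points of $\sigma_2$ are $O_2\notin\cP$ and $p$, so an $\cL$-line there would carry $1+(\text{even})\ne4$ points.
\item Hence none of the four lines of $\cL$ through $p$ is fixed by $\sigma_2$, because the fixed lines through $p$ are $\ell_2$ and $pO_2$. They therefore form two pairs $\{g,\sigma_2g\}$.
\item Each such $g$ passes through no $O_i$: a line through $p$ and $O_1$ or $O_3$ would be $\ell_2$, and $g\ne pO_2$. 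So $g$ is regular.
\item The two pairs come from different orbits. If all four lines of one orbit passed through $p$, then $p$ and $\sigma_1p$ would both lie on $g$ and on $\sigma_3g$, contradicting that two points share at most one line.
\end{itemize}
The dual statement follows by the dual argument.

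\textbf{Step (c).} The identity $b_2+b_3=\frac{n-5}2-2c$ is the orbit count with $a=1$ and $b_1=2$. For the bound $2b_2\le c'$, note that a regular line orbit meets $\ell_2$ in at most the two points $g\cap\ell_2$ and $\sigma_1g\cap\ell_2$ (since $\sigma_2g$ passes through $g\cap\ell_2$). So each regular orbit serves at most one pair on $\ell_2$, while by (b) each pair uses two orbits, giving $2b_2\le c'$; the same argument gives $2b_3\le c'$.

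For $n=23$ these give
\[
0\le 9-2c=b_2+b_3\le c',
\]
and symmetrically with $c,c'$ interchanged. Hence $c,c'\le4$ and $c,c'\ge3$. If $c=3$, then $b_2+b_3=3$ with each $b_i\le c'/2\le2$, so $\{b_2,b_3\}=\{1,2\}$, and then $c'\ge4$. This leaves exactly the cases $(4,4)$, $(4,3)$, $(3,4)$.

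The step I expect to need the most care is (b), especially the claim that the two pairs lie in different regular orbits and the exclusion of $pO_2$. Everything else is parity bookkeeping.
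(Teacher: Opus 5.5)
Your proof is correct and follows essentially the same route as the paper. You use orbit counting to force $a=a'=1$, $\ell_1\in\cL$ and $b_1=b'_1=2$. You then use a parity count on the $\sigma_1$-fixed lines through $O_1$ and on the $\sigma_2$-fixed line $pO_2$ (counting points on a line where the paper counts lines through a point, a harmless dualization), and the fact that a regular line orbit meets an axis in a single pair to get $2b_i\le c'$. The remaining differences are cosmetic: you exclude the ``no axis in $\cL$'' subcase of $a=3$ by the parity of $n$ rather than by $a'$ being odd, and you leave the case $p\in\ell_3$ in (b) to the evident $2\leftrightarrow3$ symmetry.
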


\begin{proof}
Let $a$ and $a'$ be the numbers of coordinate points in $\cP$ and of coordinate lines in $\cL$.  We first determine $a$ and $a'$.  Counting the points of $\cP$ and the lines of $\cL$ orbit by orbit gives
\[
n=a+2\sum_i b_i+4c=a'+2\sum_i b'_i+4c',
\]
so $a$ and $a'$ are odd.  Suppose that $O_i\in\cP$.  The lines of $\cL$ through $O_i$ are $\ell_j$ and $\ell_k$, if these lie in $\cL$, together with pairs of lines interchanged by $\sigma_j$ or $\sigma_k$; hence
\[
4\equiv[\ell_j\in\cL]+[\ell_k\in\cL]\pmod 2 .
\]
Suppose for sake of contradiction that $a=3$.  Then the congruence above holds for all three indices $i$, forcing the three indicators $[\ell_i\in\cL]$ to be equal.  Since $a'$ is odd, all three are equal to $1$, so $a'=3$ and $b'_i=1$ for every $i$; but then $n=3+6+4c'\equiv1\pmod4$, which is impossible.  It follows that $a=1$, say $O_1\in\cP$, and by the dual argument $a'=1$.  Since exactly one $\ell_i$ lies in $\cL$, the congruence at $O_1$ forces $\ell_2,\ell_3\notin\cL$ and $\ell_1\in\cL$, and then $4=2b'_1$ gives $b'_1=2$; the dual argument applied to $\ell_1$ gives $b_1=2$.

We now prove (a).  Let $q$ be one of the four points of $\cP$ on $\ell_1$.  Since $O_2,O_3\in\ell_1$, the only lines through $q$ that are not regular are $\ell_1$ and the line $O_1q$.  Note that a regular line $g$ through $q$ comes with the line $\sigma_1g\ne g$, which also passes through $q$, and that the orbit of $g$ contains no further line through $q$, since such a line would pass through $q$ and $\sigma_2q$ and hence coincide with $\ell_1$.  Counting the lines of $\cL$ through $q$, we obtain $4=1+[O_1q\in\cL]+2k$ for some integer $k$, whence $O_1q\in\cL$.  Thus the four lines $O_1q$, for the four points $q$ of $\cP$ on $\ell_1$, are lines of $\cL$ through $O_1$; as $\cL$ contains exactly four lines through $O_1$, these are all of them.  This proves (a).

We next prove (b).  Let $s\in\cP\cap\ell_3$, $s\ne O_1$.  The lines through $s$ are $\ell_3\notin\cL$, the line $O_3s$, and regular lines.  The regular lines through $s$ come in pairs $\{g,\sigma_3g\}$, with at most one pair from each regular orbit, since two pairs from one orbit would give a line through $s$ and $\sigma_1s$, that is, the line $\ell_3$.  Lines through $O_1$ or $O_2$ other than $\ell_3$ meet $\ell_3$ only in $O_1$ or $O_2$, so none of them passes through $s$.  Counting, we obtain $4=[O_3s\in\cL]+2k$, whence $O_3s\notin\cL$ and $k=2$.  Thus the point $s$ lies on two pairs of regular lines coming from two different regular orbits.  The same argument applies to the points of $\cP$ on $\ell_2$ and, dually, to the lines of $\cL$ through $O_2$ and $O_3$.  This proves (b).

It remains to prove (c).  A regular line meets $\ell_3$ in exactly one point, so a regular orbit of lines is attached, in the sense of (b), to at most one pair of $\cP$ on $\ell_3$.  Therefore $2b_3\le c'$, and likewise $2b_2\le c'$, $2b'_3\le c$ and $2b'_2\le c$.  Substituting $a=a'=1$ and $b_1=b'_1=2$ into the count above gives the two equations of (c).  For $n=23$ they read $b_2+b_3=9-2c$ and $b'_2+b'_3=9-2c'$, so $c,c'\le4$.  Combining these with the inequalities we obtain $9-2c\le c'$ and $9-2c'\le c$, whence $c,c'\ge3$.  If $c=3$, then $b_2+b_3=3$ with $b_2,b_3\le c'/2\le2$, so $\{b_2,b_3\}=\{1,2\}$ and $c'=4$.  If $c=4$, then $b_2+b_3=1$.  The dual statements hold with the roles of $c$ and $c'$ interchanged.  Thus $(c,c')\in\{(4,4),(4,3),(3,4)\}$, and the proof is complete.
\end{proof}

Thus for every $n\equiv3\pmod4$, in particular for every prime $n\equiv3\pmod4$, where by Theorem~\ref{thm:symmetry} the Klein four-group is the largest possible symmetry, a Klein-symmetric $(n_4)$ configuration has the same rigid shape, with one fixed point and one fixed line in the configuration, two pairs on that line and an odd number of pairs on the other two axes.  The method below applies to these $n$ with a longer list of cases, and the next primes of this form are $31$ and $43$.

Now let $n=23$.  By duality (the standard correlation $(x{:}y{:}z)\mapsto[x{:}y{:}z]$ commutes with $V$ and maps Klein-symmetric $(23_4)$ configurations to Klein-symmetric $(23_4)$ configurations) we may assume $c=4$; by the coordinate interchange $y\leftrightarrow z$ we may assume, in the case $c'=3$, that $(b'_2,b'_3)=(1,2)$.  Thus there are four cases to consider.  In two of them $c'=4$, with the pair of $\cP$ on $\ell_3$ and the pair of $\cL$ through $O_3$ or through $O_2$ (the remaining two combinations are obtained by $y\leftrightarrow z$), and in the other two $c'=3$, with the pair of $\cP$ on $\ell_3$ or on $\ell_2$.  We write $T_i=\{q_i,q_i'\}$ ($q_i'=\sigma_2q_i$), $m_i=O_1q_i$, $m_i'=O_1q_i'$, $S=\{s,s'\}$ for the pair of $\cP$ on $\ell_2\cup\ell_3$, $P_j=\{\alpha p_j:\alpha\in V\}$ for the regular point orbits and $G_a=\{\alpha g_a:\alpha\in V\}$ for the regular line orbits, and $n_k,n_k'$ for the pairs of lines through $O_2$ or $O_3$.

\begin{lemma}[local incidence rules]\label{lem:local}
\begin{enumerate}
\item[(i)] $O_1$ lies exactly on $m_1,m_1',m_2,m_2'$; each $q_i$ lies on $\ell_1$, on $m_i$, and on exactly one pair $\{g,\sigma_1g\}$ of regular lines, from one orbit; the orbits attached to $T_1$ and to $T_2$ are different.  Dually, each $m_i$ contains $O_1$, $q_i$ and exactly one pair $\{p,\sigma_1p\}$ of regular points, and the orbits attached to $\{m_1,m_1'\}$ and to $\{m_2,m_2'\}$ are different.
\item[(ii)] Each point of $S$ lies on no line of any pair $\{n_k,n_k'\}$ and on exactly two pairs $\{g,\sigma_Sg\}$ of regular lines from two different orbits, where $\sigma_S$ is the element of $V$ fixing $S$ pointwise.  Dually for the lines $n_k$.
\item[(iii)] A regular point lies on at most one line of each regular orbit and on at most one line through each $O_i$; a regular line contains at most one point of each regular orbit and at most one point on each $\ell_i$.
\item[(iv)] All degrees equal $4$, and no two points lie on two common lines.
\end{enumerate}
\end{lemma}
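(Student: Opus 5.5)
The plan is to derive each rule from Proposition~\ref{prop:structure} and the orbit geometry of $V$, using only two facts repeatedly. First, a line through two points of one $V$-orbit, or through a point and one of its images, is fixed by the corresponding element of $V$. Second, a line fixed by $\sigma_i$ is $\ell_i$ or passes through $O_i$, and dually for points. Rule (iv) is simply the definition of a combinatorial $(23_4)$ configuration together with the fact that two distinct points span one line. It is listed only so that the later polynomial systems can refer to it.

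For (i), the statement that $O_1$ lies exactly on $m_1,m_1',m_2,m_2'$ is Proposition~\ref{prop:structure}(a). For $q_i\in\ell_1$, the proof of Proposition~\ref{prop:structure}(a) already shows the following: the lines of $\cL$ through $q_i$ are $\ell_1$, $m_i$, and one pair $\{g,\sigma_1 g\}$ of regular lines, all from a single orbit, because a second line of the same orbit through $q_i$ would be $\ell_1$. It remains to show that the orbits attached to $T_1$ and $T_2$ differ. Suppose the orbit $G_a$ passes through both $q_1$ and $q_2$. Then some line $\alpha g_a$ passes through $q_2$. Applying $\alpha^{-1}$, the line $g_a$ passes through $q_1$ and through a point of $T_2$. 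Both of these points lie on $\ell_1$, so $g_a=\ell_1$, a contradiction since $g_a$ is regular. The dual claims for the lines $m_i$ follow by the same argument with points and lines interchanged. This uses the self-dual form of Proposition~\ref{prop:structure}(a), in which the points on $m_i$ other than $O_1,q_i$ are regular because $m_i$ meets $\ell_2$ and $\ell_3$ only at $O_3$ and $O_2$, which are not in $\cP$.

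Part (ii) is a restatement of Proposition~\ref{prop:structure}(b) in the present notation. There $\sigma_S$ is $\sigma_3$ or $\sigma_2$ according to which axis carries $S$, and the lines $n_k$ are the lines of $\cL$ through $O_2$ or $O_3$.

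For (iii), suppose a regular point $p$ lies on two lines $g$ and $\alpha g$ of one regular orbit, with $\alpha\ne 1$. Then $\alpha^{-1}p$ lies on $g$. This point is distinct from $p$, since $p$ is regular, and $\alpha p$ lies on $\alpha g$. The cleanest route is to note that $g\ni p$ and $\alpha g\ni \alpha p$, where $\alpha$ is an involution. So $\alpha g$ contains both $p$ and $\alpha p$, and so does $g$, because $\alpha(\alpha g)=g\ni p$ gives $\alpha p\in g$. Two distinct lines would then share the two distinct points $p$ and $\alpha p$, which is impossible. Next, a regular point $p$ on two lines through $O_i$ is impossible, since two distinct lines through $O_i$ meet only at $O_i$ and $p\neq O_i$. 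The statements for a regular line, meeting each regular point orbit at most once and each $\ell_i$ at most once, are the duals of these. The main obstacle is bookkeeping rather than mathematics: the argument must make sure each claim is quoted from the correct case of Proposition~\ref{prop:structure}, in particular which axis carries $S$ in each of the four cases.
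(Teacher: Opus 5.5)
Your proposal is correct and follows the paper's proof essentially step for step: (i) and (ii) are read off from Proposition~\ref{prop:structure} and its proof, together with the fact that two points of $\ell_1$ (dually, two lines through $O_1$) never share a regular line (point); (iii) uses the observation that $g$ and $\alpha g$ would both contain $p$ and $\alpha p$; and (iv) holds by definition. One small slip in the dual half of (i): $m_i$ passes through $O_1$, so it meets both $\ell_2$ and $\ell_3$ at $O_1$, not at $O_3$ and $O_2$ (neither of which lies on $m_i$), and the two remaining points of $\cP$ on $m_i$ are regular because $m_i$ meets the axes only in $O_1$ and $q_i$.
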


\begin{proof}
Statement (i) was established in the proof of Proposition~\ref{prop:structure}(a), together with the fact that two points of $\ell_1$ never lie on a common regular line, which is what separates the orbits attached to $T_1$ from those attached to $T_2$; statement (ii) is Proposition~\ref{prop:structure}(b).

We now prove (iii).  Let $p$ be a regular point.  Then $p$ lies on the unique line $O_ip$ through $O_i$, so $p$ lies on at most one line of $\cL$ through each centre.  Suppose for sake of contradiction that $p$ lies on two lines $g$ and $\alpha g$ of one regular orbit, where $\alpha\in V\setminus\{e\}$.  Then $g$ contains both $p$ and $\alpha p$, and hence passes through the fixed point of $\alpha$, contradicting the regularity of $g$.  Thus $p$ lies on at most one line of each regular orbit, and the statement for regular lines is dual.  This proves (iii).

Lastly, (iv) is part of the definition of an $(n_4)$ configuration.
\end{proof}

\subsection{Enumeration of quotient structures}\label{sec:enum}
A \emph{quotient structure} is an incidence structure on the $46$ symbols $O_1$, $q_i,q_i'$, $s,s'$, $\alpha p_j$, $\ell_1$, $m_i,m_i'$, $n_k,n_k'$, $\alpha g_a$ that satisfies Lemma~\ref{lem:local} and is invariant under the action of $V$ on the symbols.  Such a structure is determined by which regular orbits are attached to $T_1,T_2$, to $\{m_1,m_1'\},\{m_2,m_2'\}$, to $S$ and to the pairs $\{n_k,n_k'\}$, together with the ``coset'' data saying which two of the four lines (points) of an attached orbit pass through (lie on) a given point (line); and, for each incident pair $(P_j,G_a)$ of regular orbits, a \emph{voltage} $\alpha\in V$ meaning $\beta p_j\in\beta\alpha g_a$ for all $\beta\in V$.  The degrees of Lemma~\ref{lem:local}(iv) determine how many regular orbits of lines each $P_j$ is incident with (namely $4$ minus the number of attachments of $P_j$), and dually.  The choice of representatives $q_i,s,p_j,g_a,n_k$ is used to normalize the coset data and the voltages on a spanning forest of the graph of incident pairs $(P_j,G_a)$; the relabellings $T_1\leftrightarrow T_2$ (together with $P_1\leftrightarrow P_2$ and $G_1\leftrightarrow G_2$), $P_3\leftrightarrow P_4$, $G_3\leftrightarrow G_4$ are used to normalize the attachments; the remaining redundancy is removed by computing with \texttt{nauty} \cite{MP14} a canonical form of the coloured graph obtained from the Levi graph of the structure by adjoining, for $i=1,2,3$, a vertex of colour $i$ for every pair $\{v,\sigma_iv\}$ of points or of lines with $\sigma_iv\ne v$; two quotient structures give the same system of equations up to renaming exactly when these coloured graphs are isomorphic.  The condition (iv) that no two points lie on two common lines is imposed incrementally while the voltages are assigned, which prunes the search drastically.  The enumeration produces $61{,}240$ raw structures, which \texttt{nauty} sorts into $4021$ classes; since a wrong identification would silently lose a case, for each of the $57{,}219$ identifications we checked with independent code that the canonical relabellings make the two coloured graphs identical, so every identification is backed by an explicit isomorphism.

\begin{proposition}\label{prop:enum}
The numbers of inequivalent quotient structures in the four cases are $1361$ ($c'=4$, $S\subset\ell_3$, $\{n_1,n_1'\}$ through $O_3$), $1286$ ($c'=4$, $S\subset\ell_3$, $\{n_1,n_1'\}$ through $O_2$), $699$ and $675$ ($c'=3$, $S\subset\ell_3$ resp.\ $S\subset\ell_2$), $4021$ in all.
\end{proposition}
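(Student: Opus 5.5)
This count is a computational result, so the proof is a description of an exhaustive, verifiable enumeration; nothing here can be derived by hand. I would first fix the combinatorial skeleton in each of the four cases singled out after Proposition~\ref{prop:structure}. We have $c=4$, with $c'\in\{3,4\}$ and the stated positions of $S$ and of the pairs $\{n_k,n_k'\}$. By Lemma~\ref{lem:local}(i),(ii), the attachment data are determined as follows. Each $T_i$ chooses one regular line orbit, and the two orbits are distinct. Each $\{m_i,m_i'\}$ chooses one regular point orbit, again distinct. $S$ chooses two line orbits, and each $\{n_k,n_k'\}$ chooses two point orbits. Proposition~\ref{prop:structure}(b) forces each such orbit to be attached to at most one pair on a given axis. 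For each attachment I would also record the coset datum: which of the two $\sigma$-cosets of the orbit passes through the representative. After this, each $P_j$ has a residual degree of $4$ minus its attachments, counted in line orbits, and dually for each $G_a$. So the remaining data form a bipartite multigraph-free incidence graph between $\{P_j\}$ and $\{G_a\}$ with prescribed degrees; simplicity follows from Lemma~\ref{lem:local}(iii). Each edge carries a voltage in $V$.

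The enumeration would proceed in stages. First, list the attachment patterns up to the relabellings $T_1\leftrightarrow T_2$ (with the induced swaps), $P_3\leftrightarrow P_4$ and $G_3\leftrightarrow G_4$. Second, list the bipartite graphs of incident orbit pairs realising the residual degrees. Third, normalise the voltages on a spanning forest to the identity by rechoosing the representatives $p_j,g_a$. Fourth, assign the remaining voltages by backtracking. At each partial assignment, lift to the $46$-symbol structure and test Lemma~\ref{lem:local}(iv), the absence of two points on two common lines. Equivalently, the Levi graph must have no $4$-cycle. In voltage terms, a closed walk of length $4$ in the quotient must not have trivial net voltage, which should be checked together with the attachment incidences. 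Pruning on this condition as soon as a cycle is completed keeps the search small.

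Next comes the reduction to isomorphism classes. Each raw structure is encoded as the coloured graph described above: the Levi graph plus colour-$i$ vertices for the $\sigma_i$-pairs. I would then argue that two structures define the same polynomial system up to renaming exactly when these coloured graphs are isomorphic. The coloured vertices recover the $V$-action, so such an isomorphism is a $V$-equivariant relabelling preserving incidences, and conversely. Canonical forms from \texttt{nauty} then yield the class counts $1361$, $1286$, $699$ and $675$. For certification, every identification is re-verified by an explicit isomorphism checked with independent code, as stated.

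The main obstacle is completeness rather than the counts themselves. One must be sure that the normalisations (the spanning-forest gauge, coset choices and orbit relabellings) lose no structure. To establish this I would show that every $V$-invariant structure satisfying Lemma~\ref{lem:local} is equivalent to one in normal form, by explicitly choosing representatives along the forest. I would also cross-check the result: run an unnormalised brute-force enumeration on the smaller case $c'=3$, and compare the orbit counts using the Burnside lemma.
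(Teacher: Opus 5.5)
Your plan matches the paper's: attachments with coset data and $V$-voltages on the incidence graph of regular orbits; representatives used to gauge-fix the coset data and the voltages on a spanning forest, with the relabellings $T_1\leftrightarrow T_2$, $P_3\leftrightarrow P_4$, $G_3\leftrightarrow G_4$ normalizing the attachments; incremental pruning by the no-four-cycle condition; and \texttt{nauty} canonical forms of the Levi graph augmented by colour-$i$ vertices for the $\sigma_i$-pairs, sorting the raw structures into the $1361+1286+699+675$ classes, with each identification certified by an explicit isomorphism. Your explicit argument that the normal form loses no structure, and the brute-force Burnside cross-check, are additions the paper does not carry out; it relies instead on the certified identifications and on the same machinery recovering $(\cP_A,\cL_A)$ and $(\cP_B,\cL_B)$ at $n=23$ and Cuntz's configurations at $n=22$ and $n=26$.
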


\subsection{The polynomial systems}
Fix a quotient structure and write $q_1=(0{:}1{:}t_1)$, $q_2=(0{:}1{:}t_2)$, $s=(1{:}S{:}0)$ or $(1{:}0{:}S)$, $p_j=(1{:}u_j{:}v_j)$, $g_a=[1{:}m_a{:}n_a]$, $n_k=[1{:}S'_k{:}0]$ or $[1{:}0{:}S'_k]$; the lines $m_i=[0{:}t_i{:}-1]$ are determined.  The diagonal rescalings of the coordinates commute with $V$ and make it possible to normalize $t_1=S=1$, leaving $18$ unknowns.  Each incidence of the quotient structure is a bilinear equation; there are $20$ distinct ones.  A solution gives a $(23_4)$ configuration with incidence structure the given quotient structure if and only if the $23$ points and the $23$ lines are distinct and no incidence outside the structure occurs.  We computed with \textsc{Singular} \cite{Singular} a Gr\"obner basis of the ideal $I$ of the equations, saturated with respect to the distinctness conditions (all unknowns nonzero, $t_2^2\ne1$, $(u_j^2,v_j^2)\ne(u_k^2,v_k^2)$, $(m_a^2,n_a^2)\ne(m_b^2,n_b^2)$, $S_k'^2\ne S_l'^2$ for pairs through the same $O_i$), and solved the resulting ideal $J$ when it is zero-dimensional.

\begin{proposition}\label{prop:solve}
Of the $4021$ quotient structures, $4018$ have no complex solutions, since the saturated ideal $J$ is the unit ideal.  The remaining three are as follows.
\begin{enumerate}
\item[(a)] a structure with $S\subset\ell_3$, $\{n_1,n_1'\}$ through $O_3$, whose two solutions both satisfy $S'_1=1$, which produces an incidence not in the structure; saturating by $1-S'_1$ gives the unit ideal, so it has no realization;
\item[(b)] a structure with $S\subset\ell_3$, $\{n_1,n_1'\}$ through $O_3$, with exactly two nondegenerate solutions, both real and with coordinates in $\QQ(\sqrt{17})$; they are conjugate under $\sqrt{17}\mapsto-\sqrt{17}$, and in the normalization $u_1=v_1=1$ they are carried onto the $\QQ(\sqrt{17})$-model \eqref{eq:points17} of $(\cP_B,\cL_B)$ by the diagonal projectivities $\operatorname{diag}\bigl(1,\tfrac{1-\sqrt{17}}{8},\tfrac{1-\sqrt{17}}{8}\bigr)$ and $\operatorname{diag}\bigl(1,-\tfrac{1+\sqrt{17}}{8},-\tfrac12\bigr)$, which we verified on all $23$ points by exact arithmetic;
\item[(c)] a structure with $S\subset\ell_3$, $\{n_1,n_1'\}$ through $O_2$, with exactly one solution, which is rational and nondegenerate, namely
$t_2=u_1=v_1=\tfrac23$, $u_2=-\tfrac32$, $v_2=-1$, $u_3=\tfrac16$, $u_4=-\tfrac56$, $v_3=v_4=-\tfrac13$, $S'_1=3$, $(m_a)=(-2,3,-1,-1)$, $(n_a)=(2,-\tfrac92,-\tfrac12,\tfrac52)$; clearing denominators gives exactly the points \eqref{eq:pointsA} and lines \eqref{eq:linesA}.
\end{enumerate}
\end{proposition}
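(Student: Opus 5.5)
This is a computational statement, so I would prove it by running the pipeline of Section~\ref{sec:enum} structure by structure and certifying each output. For each of the $4021$ quotient structures of Proposition~\ref{prop:enum}, I would write down the $20$ bilinear incidence equations in the $18$ unknowns after normalizing $t_1=S=1$. I would form the ideal $I$ over $\QQ$ and saturate it by the product of the distinctness polynomials: the unknowns themselves, $t_2^2-1$, and the differences $u_j^2-u_k^2$, $v_j^2-v_k^2$ and their line analogues. The pairwise conditions $(u_j^2,v_j^2)\ne(u_k^2,v_k^2)$ are not single polynomial inequations, so I would split them into cases or saturate by the ideal they generate.

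For the $4018$ structures claimed infeasible, I would certify that the reduced Gr\"obner basis of $J$ is $\{1\}$. I would rerun the computation modulo a couple of large primes as a cheap check. Where feasible, I would also extract an explicit certificate: an expression $f^N\in I$ for the saturating product $f$, obtained by recording cofactors. Because everything is over $\QQ$, emptiness of $V(J)$ over $\CC$ gives the complex half of the claim for free.

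For the remaining three structures, $J$ should be zero-dimensional, and I would compute a lexicographic basis or rational univariate representation to list all solutions. In (a), the solutions force $S_1'=1$, which creates an incidence outside the structure, so I would saturate additionally by $1-S'_1$ and confirm the unit ideal. In (b), the elimination polynomial should factor into quadratics over $\QQ$ splitting in $\QQ(\sqrt{17})$. I would then check nondegeneracy, meaning all $23$ points and lines are distinct with no extra incidences, and verify the two stated diagonal projectivities on all $23$ points by exact arithmetic in $\QQ(\sqrt{17})$. In (c), the basis should be linear in every variable, giving the unique rational point. Clearing denominators then reproduces \eqref{eq:pointsA} and \eqref{eq:linesA}, which matches Theorem~\ref{thm:main}(A).

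The main obstacle is the reliability and cost of $4018$ unit-ideal verifications. Saturation over $\QQ$ with many inequations can blow up, so I would saturate one factor at a time, ordering the cheap ones (the nonzero unknowns) first and using modular or tracing methods. A second risk is that an incorrect saturation, especially a mistreated pairwise condition, could wrongly discard a genuine solution. To guard against this I would cross-check with an independent system or a numerical homotopy on a random sample of the $4018$ cases, and confirm that the finite-field solutions found in Section~\ref{sec:search} all lift to structure (b).
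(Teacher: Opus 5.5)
Your plan is essentially the paper's: exact Gr\"obner bases over $\QQ$ saturated by the non-degeneracy conditions, explicit unit-ideal certificates re-verified independently of the Gr\"obner implementation, and an independent re-run, which the paper does for all $4021$ systems in Macaulay2 rather than on a sample. Your correction about the pairwise conditions is essential, not optional: saturating by a product with $v_j^2-v_k^2$ and $m_a^2-m_b^2$ as separate factors would discard solution (c), where $v_3=v_4$ and $m_3=m_4$, and so would lose $(\cP_A,\cL_A)$.
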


Every statement in Proposition~\ref{prop:solve} is thus established by exact computation.  For each of the $4018$ empty systems we also extracted from Singular an explicit certificate $1=\sum_i a_if_i$ with polynomial coefficients $a_i$, and for the structure in (a) a certificate that the polynomial expressing its forbidden incidence lies in its ideal; all $4019$ certificates were re-verified by exact polynomial arithmetic in a standard-library Python \cite{python} script.  Since each certificate is checked against the system of its own quotient structure, this part of the proof depends neither on a Gr\"obner basis implementation nor on the enumeration code.  All $4021$ Gr\"obner basis computations were repeated independently in Macaulay2 \cite{M2}, with the same saturations; the two systems agree on the dimension and, where zero-dimensional, on the degree of every saturated ideal.

\subsection{Proof of Theorem \ref{thm:classification}}

\begin{proof}[Proof of Theorem~\ref{thm:classification}]
Existence is Theorem~\ref{thm:main}.  We prove uniqueness.  Let $\cC$ be a geometric $(23_4)$ configuration over $\RR$ or $\CC$ with a Klein four-group $V$ of collineations.  We first reduce to one of the quotient structures.  By Proposition~\ref{prop:structure} and Lemma~\ref{lem:local}, after a change of coordinates commuting with $V$, an interchange of $y$ and $z$, and possibly passing to the dual configuration, the incidence structure of $\cC$ is one of the $4021$ quotient structures of Proposition~\ref{prop:enum}, and the points and lines of $\cC$ give a solution of the corresponding polynomial system satisfying all of the side conditions.

We next identify the solution.  By Proposition~\ref{prop:solve}, this solution is one of the three nondegenerate solutions in (b) and (c), which are projectively equivalent to $(\cP_B,\cL_B)$ or equal to $(\cP_A,\cL_A)$.  Since both configurations are self-polar, passing to the dual configuration produces nothing new.  It remains to observe that the equivalences are realized by real projectivities when $\cC$ is real.  By Lemma~\ref{lem:general}, $\cC$ contains four real points in general position, and a projectivity mapping four real points in general position to four real points is real.  Thus $\cC$ is projectively equivalent over $\RR$ to $(\cP_A,\cL_A)$ or $(\cP_B,\cL_B)$.

The statements about polarities were established in Section~\ref{sec:config}.  This completes the proof of Theorem~\ref{thm:classification}.
\end{proof}

Theorem~\ref{thm:main} rests on exact computations that can be redone by hand, and Theorem~\ref{thm:symmetry}, Proposition~\ref{prop:structure}, Lemma~\ref{lem:local} and Proposition~\ref{prop:involution} are proved by hand.  Theorem~\ref{thm:classification} depends on the enumeration programs, on \texttt{nauty} and on exact Gr\"obner basis computations, repeated in Macaulay2, whose empty cases are certified as described above.  Independently of the enumeration, both incidence structures are rigid.  Sending four points in general position (Lemma~\ref{lem:general}) to the standard frame, the incidences determine the other points and lines as joins and intersections, except for three or four lines through known points, each parametrized in two affine charts, and the unused incidences give polynomial equations in these parameters, together with $zF=1$ for a product $F$ of squared norms of coordinate vectors.  Gr\"obner bases over $\QQ$ show that only one chart is nonempty, with exactly one real solution for $(\cP_A,\cL_A)$ (one even over $\CC$) and two for $(\cP_B,\cL_B)$, conjugate under $\sqrt{17}\mapsto-\sqrt{17}$ and differing by a relabelling from a Levi graph automorphism not induced by a collineation; a second choice of frame gives the same result.  So each configuration is determined up to projective equivalence by its incidence structure, and the proofs of Theorems~\ref{thm:main}, \ref{thm:symmetry} and~\ref{thm:classification} contain no floating-point computation.

\section{The classification at $n=22$ and $n=26$}\label{sec:22}

The machinery of Section~\ref{sec:classification} applies verbatim to other values of $n$.  We ran it for $n=22$ and $n=26$ because at each the literature supplies a Klein-symmetric configuration that the enumeration must return, so a run which omitted structures would be detected.  Let $\cC$ be a geometric $(22_4)$ configuration, over $\RR$ or $\CC$, invariant under a Klein four-group $V$, with the notation of Section~\ref{sec:classification}.

\begin{proposition}\label{prop:22structure}
No coordinate point $O_i$ belongs to $\cP$ and no coordinate line $\ell_i$ belongs to $\cL$.  Every pair-point on $\ell_i$ lies on no pair-line and on two $\sigma_i$-pairs of regular lines from two different regular orbits; every pair-line through $O_i$ contains no pair-point and two $\sigma_i$-pairs of regular points from two different orbits; a regular orbit of lines is attached in this way to at most one pair on each axis, and dually.  Consequently
\[
22=2(b_1+b_2+b_3)+4c=2(b_1'+b_2'+b_3')+4c'
\]
with $2b_i\le c'$ and $2b_i'\le c$, so that $(c,c')\in\{3,4,5\}^2\setminus\{(3,3)\}$.
\end{proposition}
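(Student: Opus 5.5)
The plan is to rerun the proof of Proposition~\ref{prop:structure} with the parity reversed: since $n=22$ is even, the odd-$n$ argument that forced a fixed point into $\cP$ now forbids one. Write $a$ and $a'$ for the numbers of coordinate points in $\cP$ and coordinate lines in $\cL$. Counting orbit by orbit gives $22=a+2\sum_i b_i+4c=a'+2\sum_i b'_i+4c'$, so $a$ and $a'$ are even. As before, if $O_i\in\cP$ then the lines of $\cL$ through $O_i$ other than $\ell_j,\ell_k$ come in pairs, so $[\ell_j\in\cL]\equiv[\ell_k\in\cL]\pmod 2$.

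The main step, and the only one that is not a transcription, is excluding $a=2$. Say $O_1,O_2\in\cP$. The two congruences force $[\ell_1\in\cL]=[\ell_2\in\cL]=[\ell_3\in\cL]$. Since $a'$ is even and at most $3$, none of the $\ell_i$ lies in $\cL$.
\begin{itemize}
\item Take a line $m\in\cL$ through $O_1$. It exists, and $m\ne\ell_2,\ell_3$ because these are not in $\cL$.
\item $m$ is fixed by $\sigma_1$, which fixes exactly two points of $m$: $O_1$ and $x=m\cap\ell_1$. All other points of $m$ come in $\sigma_1$-pairs.
\item Hence $4=1+[x\in\cP]+2k$, so $x\in\cP$. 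Here $x\ne O_2$ since $m\ne\ell_3$, and $x\ne O_3$ since $m\ne\ell_2$; so $x$ is a pair-point on $\ell_1$.
\item Now count lines through $x$. The lines through $x$ fixed by $\sigma_1$ are $\ell_1\notin\cL$ and $O_1x=m\in\cL$, and all others come in $\sigma_1$-pairs. This gives $4=1+2k$, a contradiction.
\end{itemize}
Thus $a=0$, and by the dual argument (the standard correlation commutes with $V$) $a'=0$.

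Next I would prove the local statement by the argument of Proposition~\ref{prop:structure}(b). Let $s\in\cP$ be a pair-point on $\ell_i$.
\begin{itemize}
\item A pair-line through $O_j$ or $O_k$ ($j,k\ne i$) meets $\ell_i$ only in $O_j$ or $O_k$, so it misses $s$.
\item The lines through $s$ fixed by $\sigma_i$ are $\ell_i\notin\cL$ and $O_is$. The regular lines through $s$ come in pairs $\{g,\sigma_ig\}$.
\item Two pairs from one regular orbit would give a line through $s$ and its mate, namely $\ell_i$, so each regular orbit contributes at most one pair.
\item Counting gives $4=[O_is\in\cL]+2k$, so $O_is\notin\cL$ and $k=2$, with the two pairs from different orbits.
\item A regular line meets $\ell_i$ in a single point, so each regular line orbit is attached to at most one pair on each axis. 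This gives $2b_i\le c'$, and dually $2b'_i\le c$.
\end{itemize}

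Finally, the displayed equation is the orbit count with $a=a'=0$. It gives $\sum b_i=11-2c\ge0$, so $c\le5$. Summing the three inequalities $b_i\le\lfloor c'/2\rfloor$ gives $11-2c\le3\lfloor c'/2\rfloor\le 6$ (using $c'\le5$), so $c\ge3$. The same holds for $c'$.
\begin{itemize}
\item If $c=3$, then $5\le3\lfloor c'/2\rfloor$ forces $c'\ge4$.
\item Symmetrically, $c'=3$ forces $c\ge4$.
\end{itemize}
Hence $(c,c')\in\{3,4,5\}^2\setminus\{(3,3)\}$.
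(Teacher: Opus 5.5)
Your proof is correct and follows essentially the same route as the paper: parity forces $a,a'\in\{0,2\}$; the case $a=2$ is ruled out by showing $x=m\cap\ell_1\in\cP$ and then getting an odd count of lines of $\cL$ through $x$; and the local statements, the inequalities $2b_i\le c'$, $2b'_i\le c$, and the bound $(c,c')\in\{3,4,5\}^2\setminus\{(3,3)\}$ follow exactly as in the paper. For the odd count you list the $\sigma_1$-fixed lines $\ell_1,m$ directly, where the paper phrases it as $O_1x\notin\cL$, but this is the same parity count. The only step you leave implicit is the dual local statement for pair-lines through $O_i$, which follows from the standard correlation commuting with $V$, as you already note.
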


\begin{proof}
Counting orbits as in the proof of Proposition~\ref{prop:structure}, the number $a$ of coordinate points in $\cP$ is even, so $a\in\{0,2\}$, and likewise $a'\in\{0,2\}$.  Suppose $a=2$, say $O_1,O_2\in\cP$.  The parity conditions $4\equiv[\ell_j\in\cL]+[\ell_k\in\cL]\pmod2$ at $O_1$ and at $O_2$ force the three indicators $[\ell_i\in\cL]$ to be equal, hence all zero since $a'$ is even; then $b_1'=b_2'=2$.  A pair-line $m$ through $O_1$ contains $O_1$, so $4=1+[m\cap\ell_1\in\cP]+2k$ and the point $q=m\cap\ell_1$ lies in $\cP$; it is a pair-point on $\ell_1$ (it is not $O_2$ or $O_3$, since $m$ is not an axis).  But the lines of $\cL$ through a pair-point $q\in\ell_1$ are the line $O_1q$ (if in $\cL$) and regular lines in $\sigma_1$-pairs, so $4=[O_1q\in\cL]+2k'$ gives $O_1q\notin\cL$, contradicting $O_1q=m\in\cL$.  Hence $a=0$ and dually $a'=0$.

The remaining assertions are proved exactly as in Proposition~\ref{prop:structure} and Lemma~\ref{lem:local}, with the counting $22=2\sum b_i+4c$.  From $\sum b_i=11-2c\le3\lfloor c'/2\rfloor$ and $\sum b_i'=11-2c'\le3\lfloor c/2\rfloor$ one gets $3\le c,c'\le5$, and $(c,c')=(3,3)$ is excluded because it would need $b_i\le1$ with $\sum b_i=5$.
\end{proof}

Up to permuting the coordinates there are $25$ patterns $(c,c';b_1,b_2,b_3;b_1',b_2',b_3')$.  The enumeration of quotient structures, with the same gauge fixing, four-cycle pruning and \texttt{nauty} canonical forms as in Section~\ref{sec:classification}, produced $28{,}711$ of them, each giving $22$ bilinear equations in $20$ unknowns after normalizing $u_1=v_1=1$.  Exactly $39$ structures have a nonempty saturated solution set; they carry $80$ nondegenerate complex solutions, of which $38$ are real up to a diagonal rescaling.  Sorting the real solutions into projective equivalence classes, and identifying the coordinates as algebraic numbers and verifying the resulting configurations by exact arithmetic in the corresponding number fields, gives the following.

\begin{theorem}\label{thm:22}
Up to projective equivalence there are exactly nine geometric $(22_4)$ configurations with a Klein four-group of symmetries, of five combinatorial types, as follows.
\begin{enumerate}
\item[(i)] the two configurations of Cuntz \cite{Cun18} (the parameter $w=(-7\pm3\sqrt{17})/2$), which are combinatorially isomorphic but projectively inequivalent, and their two duals;
\item[(ii)] a type $T_3$ with one configuration, defined over $\QQ(\sqrt3)$, whose points and lines are the $V$-orbits of
\begin{equation}\label{eq:T3}
\begin{gathered}
(0,1,2{+}\sqrt3),\quad (1,0,1{-}\tfrac13\sqrt3),\quad (1,-1{+}\tfrac13\sqrt3,0),\quad (1,1,1),\\
(1,2{-}\sqrt3,2{-}\sqrt3),\quad (1,-1{+}\tfrac23\sqrt3,1),\quad (1,2{-}\sqrt3,-\tfrac13\sqrt3);\\[2pt]
[0,1,-1],\quad [1,0,-1],\quad [1,-2{-}\sqrt3,0],\quad [1,\tfrac92{+}\tfrac52\sqrt3,-\tfrac32{-}\tfrac12\sqrt3],\\
[1,\tfrac32{+}\tfrac12\sqrt3,-\tfrac32{+}\tfrac12\sqrt3],\quad [1,\tfrac12{+}\tfrac12\sqrt3,-\tfrac32{-}\tfrac12\sqrt3],\quad [1,\tfrac32{+}\tfrac12\sqrt3,-\tfrac12{-}\tfrac12\sqrt3]
\end{gathered}
\end{equation}
(the first three points and lines have orbits of size two, the others of size four);
\item[(iii)] a type $T_1$ with two configurations, conjugate under $\sqrt\varphi\mapsto-\sqrt\varphi$ and projectively inequivalent, defined over the quartic field $\QQ(\sqrt5,\sqrt\varphi)$, $\varphi=(1+\sqrt5)/2$;  writing $t=\sqrt\varphi$, so that $t^4=t^2+1$, the coordinates of both are polynomials in $t$ with integer coefficients;
\item[(iv)] a type $T_2$ with two configurations, projectively inequivalent, conjugate under $\sqrt2\mapsto-\sqrt2$, defined over $\QQ(\sqrt2,\sqrt5)$.
\end{enumerate}
The same nine are the real ones among the $80$ Klein-symmetric $(22_4)$ configurations in the complex projective plane.  None of the nine is definable over $\QQ$, the configurations of type $T_1,T_2$ are not definable over $\QQ(\sqrt5)$, and all nine have collineation group exactly $V$; the Levi graphs of $T_2$ and $T_3$ have automorphism groups of order $8$, those of $T_1$ and of Cuntz's types of order $4$.
\end{theorem}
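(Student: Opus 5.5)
The proof follows the template of Theorem~\ref{thm:classification}, now starting from Proposition~\ref{prop:22structure}. Let $\cC$ be a Klein-symmetric $(22_4)$ configuration over $\RR$ or $\CC$. Choose coordinates with $V=\{\operatorname{diag}(\pm1,\pm1,\pm1)\}$. By Proposition~\ref{prop:22structure} no coordinate point or line occurs, and the orbit data form one of the $25$ patterns $(c,c';b_1,\dots,b_3';\dots)$ up to permuting coordinates. Apply the analogue of Lemma~\ref{lem:local}, which is proved verbatim: regular points meet each regular line orbit at most once, pair-points are attached to two regular line orbits, and dually. This shows that the incidence structure of $\cC$ is one of the $28{,}711$ enumerated quotient structures. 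As at $n=23$, each identification made by \texttt{nauty} should be backed by an explicit isomorphism, so that no case is silently lost.

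For each structure I would proceed as follows.
\begin{itemize}
\item Write the $22$ bilinear equations in $20$ unknowns, normalized by $u_1=v_1=1$ using the diagonal rescalings.
\item Saturate by the distinctness conditions, and then by each forbidden extra incidence.
\item For the empty cases, export certificates $1=\sum a_if_i$ and recheck them by exact arithmetic. Repeat the computation in Macaulay2.
\end{itemize}
This should leave $39$ zero-dimensional ideals with $80$ nondegenerate complex solutions. From these I would extract the real ones, using Lemma~\ref{lem:general} to reduce real projective equivalence to complex equivalence. For each real solution I would compute the minimal polynomials of the coordinates, giving the fields $\QQ(\sqrt{17})$, $\QQ(\sqrt3)$, $\QQ(\sqrt5,\sqrt\varphi)$ and $\QQ(\sqrt2,\sqrt5)$. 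I would then verify incidences and distinctness exactly in those fields, as in the proof of Theorem~\ref{thm:main}(B).

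Next, sort the real solutions into projective classes. Two $V$-symmetric realizations are equivalent exactly when some projectivity maps one onto the other, and that projectivity normalizes $V$ up to conjugation within the collineation group. Since the collineation group will turn out to be exactly $V$, it suffices to test the finitely many coordinate permutations combined with diagonal rescalings, together with all Levi graph automorphisms. For combinatorial types, I would compare \texttt{nauty} canonical forms of the Levi graphs and compute their automorphism groups, expecting orders $4$ or $8$.

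To show inequivalence of Galois-conjugate pairs, and to show non-definability over $\QQ$ or $\QQ(\sqrt5)$, I would use cross-ratios of collinear quadruples. These must lie in the field of definition, and they must match under any candidate isomorphism of Levi graphs; exhibiting one cross-ratio outside the smaller field suffices. Cuntz's configurations are identified by matching the parameter $w$. The collineation group is exactly $V$ because any larger finite group would contain a rotation of order $m\ge3$. By Proposition~\ref{prop:norotation} we have $m\in\{4\}$ in case (b) with $n=22$, and I would exclude an order-$4$ rotation by checking that no Levi graph automorphism is induced by a projectivity normalizing $V$ other than those of $V$.

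The main obstacle is the projective-equivalence sorting together with the count of exactly nine. Correctly grouping the $38$ real solutions, which come up to rescaling and up to the duality and coordinate-permutation redundancies left in the patterns, requires care. A single missed identification or spurious split would change the count.
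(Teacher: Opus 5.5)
Your proposal is correct and follows essentially the same route as the paper: the structural reduction of Proposition~\ref{prop:22structure}, enumeration of the $28{,}711$ quotient structures, exact Gr\"obner-basis solution with saturation, sorting of the $38$ real solutions into projective classes with exact verification in the number fields, and cross-ratios for the field-of-definition claims. One small slip: at $n=22$ Proposition~\ref{prop:norotation} allows $m\in\{4,11\}$, not only $m=4$, but this is harmless, since the collineation group embeds in the side-preserving Levi graph automorphism group of order $4$ or $8$, so only an extra order-$4$ rotation (that is, $G\cong D_4\supset V$) has to be excluded, and only for $T_2$ and $T_3$.
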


The statements about fields follow from cross-ratios of collinear quadruples, computed exactly.  For $T_3$, $20$ of the $22$ cross-ratios are irrational; for $T_1$ and $T_2$ only $10$ resp.\ $8$ of them lie in $\QQ(\sqrt5)$ and none in $\QQ$.  The enumeration recovered Cuntz's configurations, the only $(22_4)$ configurations in the literature, and the Levi graphs of $T_1,T_2,T_3$ are not isomorphic to theirs (checked with \texttt{nauty}), so these three types are new.

\subsection{The case $n=26$}\label{sec:26}

For $n=26$ the enumeration produces $2{,}348{,}674$ quotient structures in twenty-two patterns, all solved exactly with no system left unresolved.  They carry $244$ real nondegenerate solutions, each checked to give a geometric $(26_4)$ configuration, and a further $346$ defined only over $\CC$.

\begin{theorem}\label{thm:26}
Up to projective equivalence there are exactly $160$ geometric $(26_4)$ configurations with a Klein four-group of symmetries, falling into $95$ combinatorial types, every one with collineation group exactly $V$.
\end{theorem}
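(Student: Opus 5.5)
The plan follows the $n=22$ template of Section~\ref{sec:22}: a structure proposition for $n=26$, then the quotient-structure enumeration, then exact solving and sorting of the real solutions.

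\emph{Structure.} I would first prove the analogue of Proposition~\ref{prop:22structure} for $n=26$. Since $26$ is even, the orbit count $26=a+2\sum b_i+4c$ forces the number $a$ of coordinate points in $\cP$ to be even, and dually for $a'$. The case $a=2$ is excluded exactly as for $n=22$. The pair-line through $O_1$ forces a pair-point $q$ on $\ell_1$. The parity count at $q$ then forbids $O_1q\in\cL$, which is a contradiction. Hence $a=a'=0$, and
\[
13=\sum b_i+2c=\sum b_i'+2c',\qquad 2b_i\le c',\quad 2b_i'\le c .
\]
Lemma~\ref{lem:local} carries over verbatim. Listing the solutions up to permutation of coordinates and duality yields the twenty-two patterns.

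\emph{Enumeration.} For each pattern I would enumerate quotient structures. Each structure consists of attachment data, coset data and voltages in $V$ on incident pairs of regular orbits. I would gauge-fix on a spanning forest and prune four-cycles incrementally, as in Section~\ref{sec:enum}. Duplicates would be removed by \texttt{nauty} canonical forms of the coloured Levi graphs, with every identification certified by an explicit isomorphism. This should give the $2{,}348{,}674$ classes. For each class I would write the bilinear incidence system and normalise by the diagonal rescalings, for instance $u_1=v_1=1$. I would then compute a Gr\"obner basis saturated by the distinctness conditions. Empty cases would be certified by an identity $1=\sum a_if_i$ checked in exact arithmetic. For zero-dimensional nonempty ideals I would also saturate away each solution that creates an extra incidence. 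Every saturated ideal must turn out zero-dimensional or empty; a positive-dimensional component would need separate treatment, and I expect none.

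\emph{Real solutions and sorting.} For the remaining solutions I would identify the coordinates as algebraic numbers in explicit number fields. Exact arithmetic then verifies distinctness and that the incidences are precisely those of a $(26_4)$ configuration, separating the $244$ real solutions from the complex ones. Next I would sort the real solutions into projective equivalence classes. Two solutions are equivalent only if their Levi graphs are isomorphic, which sorts them into combinatorial types. Within a type, Lemma~\ref{lem:general} lets me send a fixed frame of four points in general position to the standard frame and compare the two normalised coordinate sets exactly, running over the Levi graph automorphisms. Equivalently, one can compare the full vectors of cross-ratios of collinear quadruples. Finally, Proposition~\ref{prop:norotation} applies with $n=26$: $m=4$ and $m=13$ are not excluded by congruences. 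So the claim that every collineation group is exactly $V$ must be checked. I would do this by computing, for each configuration, the Levi graph automorphisms and testing which are induced by projectivities; it suffices to test the images of the frame.

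The main obstacle is computational scale: solving about $2.3$ million systems exactly and certifying every empty one. I would first apply cheap modular Gr\"obner bases to discard most systems, then redo the survivors and issue certificates over $\QQ$. The sorting into $160$ classes and $95$ types is routine but error-prone, so I would cross-check it with two invariants: canonical Levi graphs and cross-ratio multisets.
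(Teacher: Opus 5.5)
Your plan follows the paper's own route. You transfer the $n=22$ structure argument to $n=26$, and your parity exclusion of $a=2$ is correct. You then enumerate quotient structures, solve the saturated systems exactly, and sort the real solutions. You also rightly check collineation groups, since Proposition~\ref{prop:norotation} leaves $m=4$ and $m=13$ open at $n=26$.

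The step that fails as written is the list of patterns. Your constraints are $\sum b_i=13-2c$, $\sum b_i'=13-2c'$, $2b_i\le c'$ and $2b_i'\le c$. They force $(c,c')$ to be one of $(2,6)$, $(3,6)$, $(4,4)$, $(4,5)$, $(4,6)$, $(5,4)$, $(5,5)$, $(5,6)$, or $(6,c')$ with $2\le c'\le 6$. Up to permuting coordinates they leave $57$ patterns, for example $(c,c')=(2,6)$ with $b=(3,3,3)$, $b'=(1,0,0)$. After identifying dual patterns they still leave $33$. So twenty-two is not what this listing produces, and your enumeration must cover every pattern the constraints allow unless you give a further exclusion argument.

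More importantly, if you shorten the list by duality, you must add the dual of every configuration you find before sorting into classes. The theorem counts projective equivalence classes. A configuration is projectively equivalent to its dual only if its combinatorial type is self-dual, and that holds for just one of the $95$ types; Cuntz's type, for instance, is not self-dual. Sorting only the solutions of a duality-reduced list would miss the duals of the configurations in the discarded patterns, and so would not reach $160$ classes or $95$ types.

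Two smaller corrections.
\begin{enumerate}
\item Equal labelled cross-ratios on all collinear quadruples is necessary for projective equivalence, but you have not shown it is sufficient, so it is not an ``equivalent'' test. Use the frame normalization, run over all Levi graph isomorphisms, as the decision procedure. Keep cross-ratios as the cross-check you mention at the end.
\item Lemma~\ref{lem:local} does not carry over verbatim. Its part (i) concerns the fixed point $O_1$ and the pairs $T_1,T_2$, which do not exist at $n=26$. What carries over is the description of pair-points and pair-lines in Proposition~\ref{prop:22structure}, together with parts (iii) and (iv).
\end{enumerate}
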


Cuntz's $(26_4)$ configuration \cite{Cun18}, the only one in the literature, is Klein-symmetric and is returned by the enumeration as one of the $95$ types, so the remaining $94$ are new.  None of the $95$ is definable over $\QQ$, and a majority require a field of degree five, in contrast with $n=22$ and $n=23$, where every Klein-symmetric configuration is defined over a field of degree at most four.

Exactly one of the types is self-dual, and it is more than self-dual.  All four of its combinatorial dualities are involutions, and the configuration admits four projective polarities, obtained from one another by composition with $V$, of which exactly one is definite.  It is therefore self-polar, like $(\cP_B,\cL_B)$, but with respect to a conic without real points.  Cuntz's $(26_4)$ is not even self-dual, since its Levi graph has the same automorphism group of order $4$ whether or not points and lines may be interchanged, so this is a different configuration.

\section{Configurations with an involution}\label{sec:remarks}

Theorem~\ref{thm:classification} classifies the Klein-symmetric $(23_4)$ configurations, and by Theorem~\ref{thm:symmetry} every other one has a single symmetry of order two or none.  The counting arguments of Section~\ref{sec:classification} apply to the first of these and reduce it to nine orbit patterns.

\begin{proposition}\label{prop:involution}
Let $\cC=(\cP,\cL)$ be a geometric $(23_4)$ configuration invariant under an involutory collineation $\sigma$, a harmonic homology with centre $O$ and axis $\ell$.  Let $a=[O\in\cP]$, $a'=[\ell\in\cL]$, let $b$ be the number of points of $\cP$ on $\ell$ and $b'$ the number of lines of $\cL$ through $O$, and let $c$ and $c'$ be the numbers of pairs $\{p,\sigma p\}$ of points of $\cP$ and $\{g,\sigma g\}$ of lines of $\cL$ not fixed by $\sigma$.  Then exactly one of the following holds.
\begin{enumerate}
\item[(a)] $a=a'=1$, $b=b'=4$ and $c=c'=9$.  Every point of $\cP$ on $\ell$ lies on a line of $\cL$ through $O$, every line of $\cL$ through $O$ meets $\ell$ in a point of $\cP$, and these correspondences are inverse bijections.
\item[(b)] $a=a'=0$, $b$ and $b'$ are odd, $(b,b')$ is one of the eight pairs
\[
(1,1),\ (1,3),\ (1,5),\ (3,1),\ (3,3),\ (3,5),\ (5,1),\ (5,3),
\]
and $c=(23-b)/2$, $c'=(23-b')/2$.  No line of $\cL$ through $O$ meets $\ell$ in a point of $\cP$; every point of $\cP$ on $\ell$ lies on exactly two pairs of lines of $\cL$, and every line of $\cL$ through $O$ contains exactly two pairs of points of $\cP$.
\end{enumerate}
\end{proposition}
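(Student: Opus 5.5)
The plan is to count orbits of $\langle\sigma\rangle$, as in the proofs of Propositions~\ref{prop:norotation} and~\ref{prop:structure}, and then to run the local degree count at the fixed points and fixed lines. The fixed points of $\sigma$ are $O$ and the points of $\ell$. The fixed lines are $\ell$ and the lines through $O$. Every other point or line lies in a $\sigma$-orbit of size two. Counting $\cP$ and $\cL$ orbit by orbit gives
\[
23=a+b+2c=a'+b'+2c',
\]
so $a+b$ and $a'+b'$ are both odd.

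Next I record the local counts, which are the analogues of the congruences used for $V$.
\begin{enumerate}
\item[(1)] Every line through $O$ is fixed. So if $O\in\cP$, the four lines of $\cL$ through $O$ are exactly the lines of $\cL$ through $O$, and $b'=4$. Dually, if $\ell\in\cL$, then $b=4$.
\item[(2)] Let $q\in\cP\cap\ell$. The fixed lines through $q$ are $\ell$ and $Oq$, and the remaining lines through $q$ come in pairs $\{g,\sigma g\}$, since $\sigma$ fixes $q$. This gives
\[
4=a'+[Oq\in\cL]+2k_q .
\]
\item[(3)] Dually, for a line $m\in\cL$ through $O$,
\[
4=a+[m\cap\ell\in\cP]+2k'_m .
\]
\end{enumerate}

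For case (a), suppose $a=1$. By (1), $b'=4$, so $a'+4$ is odd and $a'=1$. Then (1) gives $b=4$, and the orbit count gives $c=c'=9$. Now (2) reads $4=1+[Oq\in\cL]+2k_q$, which forces $Oq\in\cL$ for every $q\in\cP\cap\ell$. Dually, (3) forces $m\cap\ell\in\cP$ for every line $m\in\cL$ through $O$. The maps $q\mapsto Oq$ and $m\mapsto m\cap\ell$ are mutually inverse bijections between the points of $\ell$ and the lines through $O$, so they restrict to inverse bijections between the four points of $\cP$ on $\ell$ and the four lines of $\cL$ through $O$. The same argument shows that $a'=1$ forces $a=1$. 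Hence the only other possibility is $a=a'=0$, and the two cases are mutually exclusive.

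For case (b), take $a=a'=0$. Then $b$ and $b'$ are odd, $c=(23-b)/2$ and $c'=(23-b')/2$. Relation (2) becomes $4=[Oq\in\cL]+2k_q$, so $Oq\notin\cL$ and $k_q=2$. Thus each $q\in\cP\cap\ell$ lies on exactly two $\sigma$-pairs of lines, and dually for lines through $O$. This also shows that no line of $\cL$ through $O$ meets $\ell$ in a point of $\cP$.

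The remaining step, which is the only one with any real content, is to pin down the list of pairs $(b,b')$. A non-fixed line $g$ meets $\ell$ in a single point, and $\sigma g$ passes through the same point, because that point is fixed. So each pair $\{g,\sigma g\}$ is attached to at most one point of $\cP\cap\ell$. Since each such point uses two pairs, this gives
\[
2b\le c'=\frac{23-b'}2, \qquad\text{that is,}\qquad 4b+b'\le 23,
\]
and dually $4b'+b\le23$. With $b$ and $b'$ odd these inequalities force $b,b'\le5$ and exclude $(5,5)$. Checking the remaining cases leaves exactly the eight listed pairs: for $b=5$ we need $b'\le3$; for $b=3$ we need $b'\le5$; for $b=1$ we need $b'\le5$.
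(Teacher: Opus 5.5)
Your proposal is correct and follows the paper's proof essentially step for step: the orbit count $23=a+b+2c=a'+b'+2c'$, the two local parity relations at points of $\cP$ on $\ell$ and at lines of $\cL$ through $O$, and the attachment bound $2b\le c'$ together with its dual. The only difference is that you exclude $a=0$, $a'=1$ by dualizing the $a=1$ argument, whereas the paper derives the same parity contradiction directly from $b=4$.
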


\begin{proof}
The fixed points of $\sigma$ are $O$ and the points of $\ell$; the fixed lines are $\ell$ and the lines through $O$.  Counting points and lines orbit by orbit gives $23=a+b+2c=a'+b'+2c'$, so $a+b$ and $a'+b'$ are odd.

Let $g$ be a line of $\cL$ through $O$.  It is fixed by $\sigma$, and $\sigma$ acts on it as an involution with fixed points $O$ and $g\cap\ell$, so the points of $\cP$ on $g$ other than these come in pairs $\{p,\sigma p\}$.  Hence
\begin{equation}\label{eq:linethroughO}
4=a+[\,g\cap\ell\in\cP\,]+2k
\end{equation}
for some integer $k\ge0$.  Dually, for a point $q$ of $\cP$ on $\ell$, the lines of $\cL$ through $q$ other than $\ell$ and $Oq$ come in pairs, so
\begin{equation}\label{eq:pointonell}
4=a'+[\,Oq\in\cL\,]+2k'
\end{equation}
for some integer $k'\ge0$.

Suppose first that $a=1$.  The four lines of $\cL$ through $O$ are fixed, so $b'=4$, $a'=1$ as $a'+b'$ is odd, and $b=4$ as $\ell\in\cL$.  Then $23=1+4+2c$ gives $c=9$, and dually $c'=9$.  By \eqref{eq:linethroughO} with $a=1$, each line $g$ of $\cL$ through $O$ satisfies $g\cap\ell\in\cP$, and by \eqref{eq:pointonell} with $a'=1$, each point $q$ of $\cP$ on $\ell$ satisfies $Oq\in\cL$; the maps $g\mapsto g\cap\ell$ and $q\mapsto Oq$ are injective and inverse to each other.  This is case (a).

Suppose next that $a=0$ and $a'=1$.  Then $\ell\in\cL$ contains exactly four points of $\cP$, so $b=4$ and $a+b$ is even, a contradiction.  So $a=0$ forces $a'=0$, and $b$, $b'$ are odd.  By \eqref{eq:linethroughO} with $a=0$, every line $g$ of $\cL$ through $O$ has $g\cap\ell\notin\cP$ and contains exactly two pairs of points of $\cP$; by \eqref{eq:pointonell} with $a'=0$, every point $q$ of $\cP$ on $\ell$ has $Oq\notin\cL$ and lies on exactly two pairs $\{g,\sigma g\}$ of lines of $\cL$.  A line $g$ not fixed by $\sigma$ does not pass through $O$, and $g\cap\ell$ is fixed by $\sigma$, so $g\cap\ell=\sigma g\cap\ell$ is the unique common point of $g$ and $\sigma g$; thus each pair of lines is attached to exactly one point of $\ell$, and the $2b$ pairs attached to the $b$ points of $\cP$ on $\ell$ are distinct.  Hence $2b\le c'=(23-b')/2$, that is, $4b+b'\le23$, and dually $4b'+b\le23$.  The odd positive solutions are exactly the eight pairs in (b).  This is case (b).
\end{proof}

For self-polar configurations the patterns of Proposition~\ref{prop:involution} can be enumerated completely.  Let $\cC$ be self-polar with respect to a polarity $\pi$ and have a nontrivial collineation group $G$.  Conjugation by $\pi$ maps $G$ onto itself, since $\pi$ interchanges $\cP$ and $\cL$, and by Theorem~\ref{thm:symmetry} $G$ has order $2$ or $4$, so $G$ contains an involution $\sigma$ with $\pi\sigma\pi^{-1}=\sigma$.  Then $\pi$ maps the centre $O$ of $\sigma$ to its axis $\ell$, so $a=a'$ and $b=b'$ in Proposition~\ref{prop:involution}.  Over $\CC$ we may take $\pi$ to be the polarity of the conic $x^2+y^2+z^2=0$ and $\sigma=\operatorname{diag}(1,1,-1)$.  In case (a) the polarity pairs each point $q$ of $\cP$ on $\ell$ with the point $\ell\cap\pi(q)$, so the four points form two such pairs or contain the two isotropic points of $\ell$.  In case (b) the value $b=5$ is impossible, since five points on $\ell$ would need ten distinct pairs of lines while $c'=9$.  Four cases remain, namely (a) with two pairs, (a) with the isotropic points, and (b) with $b=1$ or $b=3$.

Since $\pi$ commutes with $\sigma$, the configuration is determined by its points, a point $p$ lying on $\pi(q)$ exactly when $p^{\mathsf T}q=0$, and the graph on $\cP$ recording these incidences is $4$-regular with no two vertices having two common neighbours.  Writing the pairs of points as $(x_k{:}y_k{:}\pm1)$, the quotient of this graph by $\sigma$ is a graph on the pairs with $\ZZ_2$-voltages, and each quotient gives a bilinear system $x_ix_j+y_iy_j\pm1=0$ in $18$ to $24$ unknowns.  We enumerated all quotients in the four cases, with canonical forms computed by \texttt{nauty} and isomorphic partial structures rejected as in Section~\ref{sec:classification}.  This gives $3{,}911{,}536$ structures, and the structures of $(\cP_A,\cL_A)$, $(\cP_B,\cL_B)$ and of the configuration $C$ below, for each of their polarities and involutions, are among them.  Saturating every system by all non-degeneracy conditions in \textsc{Singular} modulo the primes $32003$ and $1{,}000{,}003$, the same $151$ systems, with the same dimensions and degrees, are consistent modulo both, and we solved these exactly over $\QQ$ (over $\QQ(i)$ in the isotropic case); $75$ have real solutions.

For each of these $75$ systems we decomposed the solution set into Galois orbits and checked exactly that no non-degeneracy condition vanishes on any orbit, so every solution is a $(23_4)$ configuration over $\CC$.  The coordinates of a solution in a projective frame of four of its points generate its field of definition; we computed the minimal polynomial of a generator by elimination and counted its real roots exactly, each giving a real configuration.  Grouping these up to projective equivalence yields the following.

\begin{theorem}\label{thm:selfpolar}
\begin{enumerate}
\item[(a)] There are $52$ pairwise projectively inequivalent self-polar geometric $(23_4)$ configurations with a nontrivial collineation group, namely $(\cP_A,\cL_A)$ and $(\cP_B,\cL_B)$, whose group is the Klein four-group, and $50$ configurations whose group has order two.  They have $41$ distinct combinatorial types, and their fields of definition have degrees $1$ to $7$ over $\QQ$.  Four of them, $(\cP_A,\cL_A)$ and three of the new ones, can be defined over $\QQ$.
\item[(b)] Every self-polar geometric $(23_4)$ configuration with a nontrivial collineation group is projectively equivalent to one of these $52$, unless one of the $3{,}911{,}385$ systems that are inconsistent modulo both $32003$ and $1{,}000{,}003$ has a solution over $\overline{\QQ}$.
\end{enumerate}
\end{theorem}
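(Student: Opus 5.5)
The proof will assemble the computation described just before the statement, and I will keep its two parts apart. For (b), the reduction is as follows. Let $\cC$ be self-polar with respect to $\pi$ and have nontrivial collineation group $G$. Theorem~\ref{thm:symmetry} gives $|G|\in\{2,4\}$. Since conjugation by $\pi$ normalizes $G$ and a group of order $2$ or $4$ that is normalized has an involution $\sigma$ commuting with $\pi$, we may fix such a $\sigma$. Proposition~\ref{prop:involution}, with $a=a'$ and $b=b'$ forced by $\pi(O)=\ell$, then leaves the four cases listed before the theorem. After a complex projectivity (and Lemma~\ref{lem:general} to return to real equivalence), $\pi$ is the polarity of $x^2+y^2+z^2$ and $\sigma=\operatorname{diag}(1,1,-1)$. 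The incidence structure of $\cC$ is then encoded by its polar graph together with $\ZZ_2$-voltages, so it is, up to the \texttt{nauty} canonical relabelling, one of the $3{,}911{,}536$ enumerated quotients, and the coordinates of $\cC$ solve that quotient's bilinear system with all nondegeneracy conditions. The conditional clause in (b) then isolates exactly the systems not solved over $\QQ$. The other $151$ systems are consistent modulo both primes and were solved exactly over $\QQ$ or $\QQ(i)$, so every solution of them lies among the real solutions found. Hence $\cC$ is one of those, giving the list in (a).

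For (a), I would work through the $75$ systems with real solutions. In each, I decompose the saturated ideal into prime components, which give the Galois orbits, and check exactly that no nondegeneracy polynomial vanishes on any component. Every point of such a component is then a $(23_4)$ configuration over $\CC$. Next, I send four points in general position to the standard frame. The remaining coordinates then generate the field of definition, so a primitive element and its minimal polynomial (by elimination) give the degree, between $1$ and $7$. Counting real roots with Sturm sequences yields the real configurations. To group these up to projective equivalence, I compare projective invariants: the multiset of cross-ratios of the collinear quadruples, together with the Levi graph canonical form. For configurations with the same type I then test directly whether a frame-matching projectivity exists, over all Levi graph isomorphisms. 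This should produce $52$ classes. The two with group $V$ are $(\cP_A,\cL_A)$ and $(\cP_B,\cL_B)$ by Theorem~\ref{thm:classification}, and the others have group of order exactly $2$, which I check by verifying that no second involution preserves the point set. Rationality holds when the minimal polynomial is linear. Irrationality elsewhere follows from a single irrational cross-ratio, as in the proof of Theorem~\ref{thm:main}(B). The count of $41$ types comes from the \texttt{nauty} canonical forms.

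The main obstacle is the completeness step of (a), namely certifying that the exact decomposition over $\QQ$ of the $151$ consistent systems loses no components. The concern is positive-dimensional ones: the paper mentions one-parameter families, and those are exactly the configurations that, as I would argue, lose the involution generically while keeping it only at isolated points. My first attempt would be to saturate each positive-dimensional component by the conditions and to include in the list only the zero-dimensional symmetric points. I would separately verify that each family member with the involution appears once, and would cross-check degrees against the modular computations.
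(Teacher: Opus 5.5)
Your reduction for (b) and your treatment of the $75$ systems in (a) follow the paper's own argument step for step. The problem is your last paragraph. It misplaces the one-parameter families and proposes a step that would be invalid.

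Those families (Theorem~\ref{thm:families}) are curves in the realization space of a combinatorial type in which neither $\sigma$ nor $\pi$ is imposed. The family of $N_{46}$ stays self-polar but generically has trivial collineation group. Exact members of the families of $N_{31}$ and $N_{44}$ keep an involution but have no polarity. So these families are not components of the $151$ systems. Every point of the saturated variety of one of those systems is, by construction, a $\sigma$-invariant configuration self-polar for $x^2+y^2+z^2$. Nothing on such a component can ``lose the involution generically''. A positive-dimensional component surviving the saturation could therefore not be pruned to its ``symmetric points''. It would be an infinite family of projectively inequivalent configurations of exactly the kind being classified, and the count of $52$ in (a) would be false.

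What the proof needs here is a verification, not a filter: each of the $151$ saturated ideals must be zero-dimensional once the residual freedom is removed. That freedom is present. The complex rotations of the $(x,y)$-coordinates commute with $\sigma=\operatorname{diag}(1,1,-1)$ and with $\pi$, and they preserve every equation $x_ix_j+y_iy_j\pm1=0$. Unless you normalize one coordinate, every nonempty solution set is at least one-dimensional. Then your decomposition into finitely many Galois orbits is not available, and your proposed filter would discard every configuration. After that normalization, zero-dimensionality is what the exact solution over $\QQ$ or $\QQ(i)$ delivers, and the finiteness in (a) comes from it.

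A smaller point concerns rationality. The $23$ cross-ratios of the collinear quadruples need not generate the frame field, so an irrational one need not exist. You do not need one: frame coordinates are projective invariants, so a frame field of degree greater than $1$ already rules out definability over $\QQ$.
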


\begin{figure}[t]
\centering
\includegraphics[width=0.74\textwidth,page=3]{figs.pdf}
\caption{The $50$ configurations of Theorem~\ref{thm:selfpolar} whose collineation group has order two, each drawn so that its involution is the reflection in the vertical axis, with fixed points in red. Figure created by the author using Matplotlib 3.10.9 \cite{matplotlib}.}
\label{fig:gallery}
\end{figure}

We write $A$, $B$ for $(\cP_A,\cL_A)$, $(\cP_B,\cL_B)$, $C$ for the first configuration with a single involution that we found, defined over the cubic field $\QQ(\theta)$ with $\theta^3-\theta^2+3\theta-1=0$, and $N_1,\dots,N_{49}$ for the other configurations, in the order in which the enumeration finds them.  Of the $52$ fields of definition, four have degree $1$, those of $A$, $N_{14}$, $N_{31}$ and $N_{38}$, ten have degree $2$, among them that of $B$, eight have degree $3$, among them that of $C$, and $18$, $5$, $6$ and $1$ have degrees $4$, $5$, $6$ and $7$.  Case (a) yields $A$, $B$, $C$, and case (b) with $b=1$ and $b=3$ yields $N_1,\dots,N_{17}$ and $N_{18},\dots,N_{49}$.

\begin{theorem}\label{thm:third}
Let $\cP_N$ consist of the three points $(0{:}1{:}0)$, $(0{:}2{:}{-}3)$, $(0{:}4{:}{-}3)$ and the twenty points $(\pm x{:}y{:}z)$ for $(x{:}y{:}z)$ among
\begin{equation}\label{eq:pointsN}
\begin{gathered}
(1{:}{-}1{:}1),\quad (2{:}{-}1{:}1),\quad (2{:}{-}2{:}1),\quad (2{:}{-}2{:}5),\quad (2{:}{-}7{:}5),\\ (2{:}{-}8{:}5),\quad (4{:}{-}4{:}5),\quad (4{:}{-}6{:}5),\quad (4{:}{-}8{:}5),\quad (8{:}{-}2{:}5),
\end{gathered}
\end{equation}
and let $\cL_N$ consist of their polars with respect to the conic $5x^2+20y^2+40yz+16z^2=0$.  Then $(\cP_N,\cL_N)$ is a geometric $(23_4)$ configuration with integer coordinates, self-polar with respect to a real conic, whose collineation group is $\{1,\sigma\}$ with $\sigma=\operatorname{diag}(-1,1,1)$.  It is the configuration $N_{31}$ of Figure~\ref{fig:gallery}.
\end{theorem}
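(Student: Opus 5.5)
The plan is to verify Theorem~\ref{thm:third} by exact integer arithmetic, in the same way Theorem~\ref{thm:main}(A) was checked, and then to pin down the collineation group using Theorem~\ref{thm:symmetry} and Proposition~\ref{prop:involution}. The conic $5x^2+20y^2+40yz+16z^2=0$ has Gram matrix
\[
B=\begin{pmatrix}5&0&0\\0&20&20\\0&20&16\end{pmatrix},
\]
with $\det B=5(320-400)=-400\neq0$. Its signature is indefinite, since the $2\times2$ block has negative determinant, so the conic is real; one point on it is $(2{:}1{:}{-}1)$ up to scaling. The polar of $p$ is $\{q:p^{\mathsf T}Bq=0\}$.

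\textbf{Incidences.} As in Section~\ref{sec:config}, the configuration is encoded by the graph $\Gamma$ on $\cP_N$ with $p\sim q$ iff $p^{\mathsf T}Bq=0$. I would check:
\begin{itemize}
\item[(1)] the $23$ points are pairwise distinct; this is visible from the normalized coordinates, and distinct points have distinct polars because $B$ is nondegenerate;
\item[(2)] $\Gamma$ has no loops, i.e.\ $p^{\mathsf T}Bp\ne0$ for every $p$;
\item[(3)] every vertex has exactly four neighbours;
\item[(4)] two distinct vertices never share two neighbours.
\end{itemize}
This is $276$ integer bilinear evaluations. The work halves because $\sigma=\operatorname{diag}(-1,1,1)$ commutes with $B$: for the ten representatives it suffices to evaluate $q^{\mathsf T}Bq'$ and $q^{\mathsf T}B\sigma q'$, together with the values on the three fixed points on $\ell=\{x=0\}$. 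For example, $(0{:}1{:}0)^{\mathsf T}B(x{:}y{:}z)=20(y+z)$, so $(0{:}1{:}0)$ is adjacent exactly to points with $y=-z$. These are $(\pm1{:}{-}1{:}1)$ and $(\pm4{:}{-}4{:}5)$ after rescaling; I would confirm each such case concretely.

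\textbf{Symmetry.} Since $\sigma$ negates $x$ and the point list is closed under $x\mapsto-x$, $\sigma$ preserves $\cP_N$. Since $\sigma^{\mathsf T}B\sigma=B$, it also preserves $\cL_N$. The three fixed points lie on the axis $\ell=\{x=0\}$, which is case (b) of Proposition~\ref{prop:involution} with $b=3$.

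\textbf{Group is exactly $\{1,\sigma\}$.} By Theorem~\ref{thm:symmetry} the group is $1$, $C_2$ or $V$. By Theorem~\ref{thm:classification}, if it contained a Klein four-group the configuration would be projectively equivalent to $(\cP_A,\cL_A)$ or $(\cP_B,\cL_B)$. Both of those have a fixed point and a fixed line in the configuration, i.e.\ case (a) of Proposition~\ref{prop:structure}. Here I would instead exhibit an invariant that separates $(\cP_N,\cL_N)$: it has three points on the axis of its involution and no point of $\cP$ at the centre $(1{:}0{:}0)$. The cleanest separating invariant is probably the Levi graph automorphism group computed with \texttt{nauty}. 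A hand alternative uses the fact that in $A$ and $B$ every involution of $V$ is of the type described in Proposition~\ref{prop:involution}; I would check that each of the three involutions of $A$ and $B$ has $a=1$ or $b\ne3$, so none matches $\sigma$'s pattern $a=0$, $b=3$. The identification with $N_{31}$ is by matching this structure against the enumeration's output, i.e.\ a canonical-form comparison.

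The main obstacle is step (4), the no-two-common-neighbours check, together with the bookkeeping of the $\sigma$-voltages. These are routine but error-prone, so I would do them by script in exact integers, as was done for $(\cP_A,\cL_A)$.
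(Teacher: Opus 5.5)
Your step (2) is false for this configuration, so the check as you set it up would fail. Writing the form as $5x^2+20(y+z)^2-4z^2$, it vanishes at $(\pm2{:}{-}7{:}5)$, $(\pm4{:}{-}4{:}5)$ and $(\pm4{:}{-}6{:}5)$. So six points of $\cP_N$ lie on the conic, hence on their own polars. These incidences are needed for the degrees: the four points of $\cP_N$ on the polar of $(4{:}{-}4{:}5)$ are $(1{:}{-}1{:}1)$, $(2{:}{-}2{:}1)$, $(2{:}{-}2{:}5)$ and $(4{:}{-}4{:}5)$ itself. ``No loops'' was a feature of $(\cP_B,\cL_B)$, not part of the criterion. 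Drop (2), count each loop once in (3), and in (4) let a looped vertex count as its own neighbour. There are also two smaller slips. First, $(4{:}{-}4{:}5)$ has $y+z=1$, so the neighbours of $(0{:}1{:}0)$ are $(\pm1{:}{-}1{:}1)$ and $(\pm2{:}{-}1{:}1)$. Second, $(2{:}1{:}{-}1)$ is not on the conic (the form takes the value $16$ there), although your signature argument for reality is correct.

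Your hand argument against a Klein four-group cannot work either. In both $(\cP_A,\cL_A)$ and $(\cP_B,\cL_B)$ the involution $\sigma_3$ has centre $O_3\notin\cP$ and exactly the three points $O_1,S^{\pm}$ of $\cP$ on its axis $\ell_3$. So it has the pattern $a=0$, $b=3$, and in $(\cP_B,\cL_B)$ even $b'=3$, exactly as for $\sigma$ here; no count of this kind separates.

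The paper's argument needs neither such an invariant nor Theorem~\ref{thm:classification}. The Levi graph of $(\cP_N,\cL_N)$ has exactly $6$ side-preserving automorphisms, and the collineation group $G$ embeds in this group. It acts faithfully on $\cP_N$ by Lemma~\ref{lem:general}, and it preserves $\cL_N$ because the incidence computation shows the polars are exactly the lines containing four points of $\cP_N$. So the order of $G$ divides $6$, and together with Theorem~\ref{thm:symmetry} and $\sigma\in G$ this gives $G=\{1,\sigma\}$.

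Your \texttt{nauty} fallback would also succeed once the orders are computed ($6$ against $4$ and $8$). But at that point $4\nmid 6$ already excludes $V$, and routing through the classification only makes the result depend on the full Gr\"obner computation.
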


\begin{proof}
Evaluating the bilinear form $5x_px_q+20y_py_q+20(y_pz_q+z_py_q)+16z_pz_q$ on the $276$ pairs of points shows in integers, as for Theorem~\ref{thm:main}(A), that the $23$ polars are exactly the lines containing four points of $\cP_N$ and that every point lies on four of them.  The Levi graph has $6$ side-preserving automorphisms, so the collineation group, which contains $\sigma$ and has order at most four by Theorem~\ref{thm:symmetry}, is $\{1,\sigma\}$.
\end{proof}

In $23$ of the $52$ configurations some side-preserving automorphisms of the Levi graph, of order $3$, $4$ or $6$ in $14$ of them (among them $B$ and $N_{31}$) and involutions in $9$, are induced by no collineation.

Finally we asked whether these configurations deform.  Fixing four points and one coordinate of every other point, the incidences become $92$ polynomial equations in $38$ unknowns, whose Jacobian matrix has, numerically, full rank at $48$ of the $52$ configurations (smallest singular value at least $10^{-3}$, against rounding errors below $10^{-15}$), so each is isolated in the realization space of its type.  For $N_{27}$, $N_{31}$, $N_{44}$ and $N_{46}$ it has rank $37$, and numerical continuation follows a real curve of realizations through each.  We computed three of these curves exactly in the charts of Section~\ref{sec:classification}, choosing the four points sent to $(1{:}0{:}0)$, $(0{:}1{:}0)$, $(0{:}0{:}1)$, $(1{:}1{:}1)$ so that only three chart parameters $t_0,t_1,t_2$ remain.

\begin{theorem}\label{thm:families}
\textup{(a)} For the types of $N_{27}$, $N_{44}$ and $N_{46}$ there are charts in which the equations, saturated over $\QQ$ by all non-degeneracy conditions, define irreducible nonsingular affine curves of genus $1$, $1$ and $0$ with real points satisfying all these conditions.  Consequently there are infinitely many geometric $(23_4)$ configurations up to projective equivalence.

\textup{(b)} For the type of $N_{27}$ the curve is birational over $\QQ$ to the elliptic curve $y^2+y=x^3-x^2$ of conductor $11$, which has only five rational points.

\textup{(c)} For the type of $N_{46}$ the curve is $t_1+t_2=1$, $(t_0-1)^2t_2^2-(t_0-1)t_2+2t_0=0$, parametrized by $t_0=(1-s^2)/8$ and $t_2=4(s-1)/(s^2+7)$.  For rational $s\notin\{-3,-1,1,3,5\}$ its point is a geometric $(23_4)$ configuration with rational coordinates, self-polar with respect to a real conic, with trivial collineation group for all but finitely many $s$.
\end{theorem}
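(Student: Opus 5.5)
The proof is computational, and I would organise it type by type, proving (c) and (b) first and then deducing (a).

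\emph{Setting up the curves.} For each of the three types (those of $N_{27}$, $N_{44}$, $N_{46}$) I fix the frame: four points of the configuration in general position, which exist by Lemma~\ref{lem:general}, are sent to $(1{:}0{:}0),(0{:}1{:}0),(0{:}0{:}1),(1{:}1{:}1)$. I propagate the incidences by joins and intersections, as in the rigidity argument at the end of Section~\ref{sec:classification}, until only three chart parameters $t_0,t_1,t_2$ remain, namely lines through known points parametrized in an affine chart. The unused incidences give polynomials in $\QQ[t_0,t_1,t_2]$. I then saturate over $\QQ$ in \textsc{Singular} by the product $F$ of all non-degeneracy conditions: distinctness of points and lines via nonvanishing cross products, and the absence of extra incidences via the nonvanishing of every dot product not in the structure. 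The output is a radical ideal $J$. For each type I would check that $J$ has dimension one, that it is prime (absolute irreducibility can be checked by factoring over an extension or via a primary decomposition over $\QQ$ plus a check modulo a good prime), and that the singular locus is empty, by saturating $J+(\text{minors of the Jacobian})$ to the unit ideal. The genus comes from the plane model obtained by eliminating to two variables, after normalising, or from the Hilbert polynomial of a smooth projective closure. I would also certify the resulting curve equations by substituting them back into the incidence polynomials with exact arithmetic.

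\emph{Part (c).} This part can be checked by hand-sized algebra. Substituting $t_0=(1-s^2)/8$ and $t_2=4(s-1)/(s^2+7)$ into $(t_0-1)^2t_2^2-(t_0-1)t_2+2t_0$ should give zero identically, and $t_1=1-t_2$. For the points of the configuration I would write the $23$ coordinate vectors as polynomials in $s$ after clearing denominators. Every non-degeneracy condition and every non-incidence then becomes a nonzero polynomial in $s$. I would factor all of them over $\QQ$ and confirm that their only rational roots lie in $\{-3,-1,1,3,5\}$; irrational roots are harmless for rational $s$. Self-polarity is checked symbolically, since the polarity is already present at $N_{46}$ and persists along the family. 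The conic is real because it has a real point, or because its signature is indefinite, for all $s$ with finitely many exceptions; at worst I would need it for a dense set of $s$.

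\emph{Trivial symmetry and distinct configurations in (c).} By Theorem~\ref{thm:symmetry} the collineation group has order at most four. A nontrivial collineation induces a Levi-graph automorphism, so I only need to test the finitely many side-preserving automorphisms of the fixed combinatorial type. For each one I solve for the projectivity determined by the images of the four frame points and check when it maps all points correctly. This is a polynomial condition in $s$, and it is not identically satisfied, because generic members are not symmetric (for example, the involution fails already at a sample value of $s$). So it holds for only finitely many $s$. Infinitely many rational $s$ give infinitely many projectively inequivalent configurations, since a cross-ratio of a collinear quadruple is a nonconstant rational function of $s$, each value of which is taken finitely often. The same argument, applied to the real points of the genus-$1$ curves, gives (a). There I need to exhibit a real point satisfying all conditions, namely $N_{27}$ or $N_{44}$ itself, which is a nonsingular real point, so a real arc of points passes through it.

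\emph{Part (b).} I compute an explicit birational map over $\QQ$ from the $N_{27}$ curve to a Weierstrass model, starting from a rational point (one exists, or a degree-one divisor), and verify that it matches $y^2+y=x^3-x^2$, which is 11a3 with Mordell--Weil group $\ZZ/5$.

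\emph{Main obstacle.} I expect the hardest step to be the exact saturation by the large product $F$ and the certification of nonsingularity and irreducibility of the resulting ideals. I would first try saturating factor by factor, and choose the frame so that the degrees stay small.
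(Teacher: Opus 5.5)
Apart from one point, your plan is the paper's proof. The shared steps are:
\begin{itemize}
\item for (a), the same chart and saturation, with primality, empty singular locus and genus by normalization;
\item for (b), an explicit birational map to $y^2+y=x^3-x^2$ (your identification as 11a3 with Mordell--Weil group $\ZZ/5$ is what justifies the five rational points);
\item for (c), the same verification of incidences and non-degeneracy as polynomials in $s$, and the same argument that a Levi graph automorphism induced for infinitely many $s$ is induced identically, refuted at a sample value.
\end{itemize}
Using $N_{27}$ and $N_{44}$ themselves as the required real points is a valid substitute for the paper's slices $t_0=1/2$, $t_0=2$ with Sturm sequences, because those configurations were verified exactly. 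Your cross-ratio argument for infinitely many classes needs a repair. A projective equivalence between two members may carry your collinear quadruple to a different one via a Levi graph automorphism, so you must range over all quadruples and all automorphisms. The paper's observation that a projective class gives at most as many chart points as there are automorphisms handles this directly.

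The gap is self-polarity in (c). The statement asserts self-polarity with respect to a \emph{real} conic for every rational $s\notin\{-3,-1,1,3,5\}$. Your plan only delivers it generically: a symbolic check over $\QQ(s)$, and reality ``for all $s$ with finitely many exceptions; at worst for a dense set''.

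Moreover, ``the polarity is already present at $N_{46}$ and persists along the family'' is the claim to be proved, not a reason for it. Take the operation sending a member to its dual, relabelled by a duality $\delta$ of the Levi graph. It has $N_{46}$ as a fixed point, but nothing forces it to fix the whole curve, nor the realizing correlation to stay symmetric.

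What is needed:
\begin{itemize}
\item Solve $B(s)\,p\parallel\delta(p)$ over $\QQ(s)$ and find that the solution $B(s)$ is symmetric.
\item Clear denominators and factor $\det B(s)$. The paper obtains $-256(s-5)^2(s-3)(s-1)^4(s+3)^2(s^2+7)$, which has no admissible rational root. So at every allowed $s$, $B(s)$ is invertible and maps each point to its line $\delta(p)$ rather than to zero, i.e.\ it is a genuine polarity of that member.
\item For reality at every allowed $s$: the signature of $B(s)$ is constant on each interval of $\RR\setminus\{-3,1,3,5\}$. Checking indefiniteness at one sample point per interval therefore gives a real conic for all allowed $s$, not just generically.
\end{itemize}
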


\begin{proof}
With Singular, in each case the saturated ideal is prime of dimension $1$ with empty singular locus, and the genus was computed by normalization.  Fixing $t_0=1/2$ for $N_{27}$ and $t_0=2$ for $N_{44}$ leaves $2$ and $4$ points, all real by Sturm sequences, and adding any non-degeneracy condition gives the unit ideal; for $N_{46}$ see (c).  Near such a nonsingular real point the real points form an arc (implicit function theorem), on which the open non-degeneracy conditions hold.  A chart fixes four points, so a projective equivalence class gives at most as many chart points as the Levi graph has automorphisms, and the arc contains infinitely many classes.

For (b), the curve projects birationally, as $t_2=(t_1-t_0)/(t_0^2-2t_0+t_1)$ on it, onto $t_1^2+t_0(t_0-2)(t_0+1)t_1=t_0(2t_0^2-4t_0+1)$, which $x=1/t_0$, $y=(t_1-t_0)/(t_0^2(t_0-1))$ map birationally onto $y^2+y=x^3-x^2$, the modular curve $X_1(11)$.

For (c), all points and lines become polynomial vectors in $s$, every incidence holds identically, and the non-degeneracy conditions fail at no other rational $s$.  Solving $Bp\parallel\delta(p)$ over $\QQ(s)$ for a duality $\delta$ of the Levi graph gives a symmetric matrix $B(s)$ whose determinant $-256(s-5)^2(s-3)(s-1)^4(s+3)^2(s^2+7)$ vanishes only at excluded values, and which is indefinite.  A side-preserving automorphism of the Levi graph that is induced by a collineation for infinitely many $s$ is induced for every $s$, and for $s=-1/3$ none of the three nontrivial ones is.  This member consists, after scaling, of the points $(1{:}0{:}0)$, $(0{:}1{:}0)$, $(0{:}0{:}1)$, $(1{:}1{:}1)$, $(1{:}1{:}0)$, $(4{:}7{:}0)$, $(0{:}3{:}8)$, $(0{:}3{:}{-}1)$, $(3{:}3{:}{-}1)$, $(3{:}6{:}2)$, $(3{:}6{:}7)$, $(9{:}0{:}{-}7)$, $(9{:}3{:}1)$, $(9{:}3{:}11)$, $(9{:}6{:}1)$, $(9{:}12{:}5)$, $(9{:}18{:}1)$, $(9{:}21{:}{-}7)$, $(9{:}30{:}{-}7)$, $(15{:}12{:}7)$, $(18{:}6{:}7)$, $(33{:}21{:}7)$, $(81{:}42{:}49)$ and their polars with respect to $7x^2+10y^2+9z^2-14xy-24xz+6yz=0$, checked in integers as for Theorem~\ref{thm:third}.
\end{proof}

\section{Discussion and future work}\label{sec:future}

The results settle the existence question for geometric $(n_4)$ configurations and explain why $n=23$ was the hardest case.  Theorem~\ref{thm:symmetry} shows that symmetry, the tool behind almost every earlier construction, is limited to a Klein four-group at $n=23$, and Theorem~\ref{thm:classification} shows that this largest symmetry is attained by exactly two configurations.  Beyond them the picture changes completely.  Theorems~\ref{thm:selfpolar} and~\ref{thm:families} show that $(23_4)$ configurations are plentiful once less symmetry is required, with infinitely many of them having rational coordinates, so the difficulty at $n=23$ lay in finding the first one rather than in a scarcity of configurations.  Methodologically, imposing the largest possible symmetry, searching over finite fields and certifying the answer exactly succeeded where the earlier symmetric constructions could not.

Theorem~\ref{thm:selfpolar}(b) is conditional only on the $3{,}911{,}385$ systems that are inconsistent modulo both primes.  A system with a solution over $\overline{\QQ}$ remains consistent modulo all but finitely many primes, so a missed configuration would need both $32003$ and $1{,}000{,}003$ to be exceptional for the same system.  Certifying these systems over $\QQ$, as we did for the $4018$ systems of Section~\ref{sec:classification}, would remove the condition.

The next step is the configurations with an involution that are not self-polar.  They exist, since exact members of the families of $N_{31}$ and $N_{44}$ have an involution but no polarity, and Proposition~\ref{prop:involution} reduces them to nine orbit patterns.  Without a polarity the lines are independent of the points, which roughly doubles the number of unknowns, the patterns with $b=5$ or $b'=5$ are no longer excluded, and the families of $N_{31}$ and $N_{44}$ show that the solution sets need not be finite, so the systems must be decomposed into components rather than only solved.

\begin{question}\label{q:spaces}
Which combinatorial $(23_4)$ types are realizable, and which curves occur as their realization spaces?  Is any of dimension two or more?
\end{question}

Of the $52$ configurations of Theorem~\ref{thm:selfpolar}, $48$ are isolated, and Theorem~\ref{thm:families} gives two elliptic curves and a rational curve, one birational to the modular curve $X_1(11)$; the curve through $N_{31}$ has not yet been computed exactly.  By Mn\"ev's universality theorem \cite{Mne88}, realization spaces of point-line incidence structures are, up to stable equivalence, arbitrary semialgebraic sets, so it is natural to ask how much of this already occurs for $(n_4)$ configurations, and why a modular curve appears at $n=23$.

\begin{question}\label{q:five}
Can the finite-field search of Section~\ref{sec:search} find geometric $(n_5)$ configurations for small $n$?
\end{question}

They exist for all $n\ge576$ \cite{BGP21}, but the least such $n$ is unknown.  By Proposition~\ref{prop:five}, a search at a prime $n\not\equiv1\pmod5$ can impose at most a Klein four-group, as at $n=23$.  The next step would be an analogue of Proposition~\ref{prop:structure} for Klein-symmetric $(n_5)$ configurations, which would reduce such a search to quotient structures as in Section~\ref{sec:classification}.

Finally, the classification of Section~\ref{sec:classification} can be run at the next primes $n\equiv3\pmod4$, namely $31$ and $43$, where by Theorem~\ref{thm:symmetry} the Klein four-group is again the largest possible symmetry, and would determine the most symmetric $(31_4)$ and $(43_4)$ configurations.

\end{document}